\newtheorem{theorem}{Theorem}
\newtheorem{lemma}{Lemma}[section]
\newtheorem{remark}{Remark}[section]
\newtheorem{corollary}{Corollary}[section]
\newtheorem{proposition}{Proposition}[section]
\numberwithin{equation}{section}
\begin{document}
\title{ New sharp inequalities for operator means  }
\author{Shigeru Furuichi, Hamid Reza Moradi  and Mohammad Sababheh }
\subjclass[2010]{Primary 47A63, Secondary 46L05, 47A60.}
\keywords{Operator inequality, positive linear map, P\'olya-Szeg\"o inequality, operator monotone function} \maketitle

\begin{abstract}
New sharp multiplicative reverses of the operator  means inequalities  are presented, with a simple discussion of squaring an operator inequality. As a direct consequence, we extend the operator P\'olya-Szeg\"o inequality to arbitrary operator means.  Furthermore, we obtain some new lower and upper bounds for the Tsallis relative operator entropy, operator monotone functions and positive linear maps. 
\end{abstract}
\pagestyle{myheadings}
\markboth{\centerline {S. Furuichi, H.R. Moradi \& M. Sababheh}}
{\centerline {}}
\bigskip
\bigskip
\section{Notation and preliminaries}
Let $\mathcal{B}\left( \mathcal{H} \right)$ be the ${{C}^{*}}$-algebra of all bounded linear operators acting on a complex (separable) Hilbert space $\left( \mathcal{H},\left\langle \cdot,\cdot \right\rangle  \right)$ and $I$ be its identity. An operator $A\in \mathcal{B}\left( \mathcal{H} \right)$  is said to be positive semi-definite (denoted by $A\geq  0$) if $\left\langle Ax,x \right\rangle \geq 0$ for all  vectors $x\in \mathcal{H}$. If $\left<Ax,x\right>\;>0$ for all nonzero vectors $x\in\mathcal{H}$, $ A$ is said to be positive (denotes $A>0).$ For self-adjoint operators $A,B\in \mathcal{B}\left( \mathcal{H} \right)$, $A\le B$ means $B-A$ is positive semi-definite. 

A continuous  function $f:[0,\infty)\to\mathbb{R}$ is said to be operator monotone (resp. operator monotone decreasing) if for two positive operators $A$ and $B$, the inequality $A\le B$ implies $f\left( A \right)\le f\left( B \right)$ (resp. $f\left( A \right)\ge f\left( B \right)$). It is known that a non-negative operator monotone function $f$ has the following property \cite[Theorem V.2.5]{bhatia}: for every $A,B\ge 0$, $f\left( \frac{A+B}{2} \right)\ge \frac{f\left( A \right)+f\left( B \right)}{2}$. Such a function is referred to as an operator concave function. A real function $f$, such that $-f$ is operator concave is called operator convex.
A linear map $\Phi :\mathcal{B}\left( \mathcal{H} \right)\to \mathcal{B}\left( \mathcal{H} \right)$ is positive if $\Phi \left( A \right)\ge 0$ whenever $A\ge 0$. It is said to be unital if $\Phi \left( I \right)=I$.

For positive invertible operators $A,B\in \mathcal{B}\left( \mathcal{H} \right)$, the operator weighted arithmetic, geometric and harmonic means are defined, respectively, by
\[A{{\nabla }_{v}}B=\left( 1-v \right)A+vB,\quad\text{ }A{{\sharp}_{v}}B={{A}^{\frac{1}{2}}}{{\left( {{A}^{-\frac{1}{2}}}B{{A}^{-\frac{1}{2}}} \right)}^{v}}{{A}^{\frac{1}{2}}},\quad\text{ }A{{!}_{v}}B={{\left( {{A}^{-1}}{{\nabla }_{v}}{{B}^{-1}} \right)}^{-1}}\] 
where $v\in \left[ 0,1 \right]$. When $v=\frac{1}{2}$, we drop the $v$ from the above notations. The following arithmetic-geometric-harmonic-mean  inequalities hold
\begin{equation}\label{6}
A{{!}_{v}}B\le A{{\sharp}_{v}}B\le A{{\nabla }_{v}}B.
\end{equation}
Over the years, various reverses and refinements of \eqref{6} have been obtained in the literature, e.g., \cite{3,sab_laaa,sab_mia,01}. In this paper, we will present new reverse inequalities of \eqref{6}. Our inequalities sharpen  many previously known results. See Theorem \ref{19} and its consequences below.

The operator P\'olya-Szeg\"o inequality \cite[Theorem 4]{Lee2009} is given as follows:
\begin{equation}\label{24}
\Phi \left( A \right)\sharp\Phi \left( B \right)\le \frac{M+m}{2\sqrt{Mm}}\Phi \left( A\sharp B \right),
\end{equation}
whenever $mI\le A, B\le MI$, $\Phi$ is a positive linear map on $\mathcal{B}(\mathcal{H})$ and $m,M$ are positive numbers.

Hoa et al. \cite[Theorem 2.12]{5} proved that if $\Phi $ is a positive linear map, $f$ is a nonzero operator monotone function on $\left[ 0,\infty  \right)$ and $A,B\in \mathcal{B}\left( \mathcal{H} \right)$ such that $0<mI\le A,B\le MI$, then
\begin{equation} \label{Hoa_ineq01}
f\left( \Phi \left( A \right) \right)\tau f\left( \Phi \left( B \right) \right)\le \frac{{{\left( M+m \right)}^{2}}}{4Mm}f\left( \Phi \left( A\sigma B \right) \right),
\end{equation}
where $\sigma ,\tau $ are two arbitrary operator means between the arithmetic mean $\nabla$ and the harmonic mean $!$. In this paper we extend this result to the weighted means $\tau_v,\sigma_v$ and under the sandwich assumption $sA\leq B\leq tA.$ Our results will be natural generalizations of \eqref{Hoa_ineq01}.
\section{Main Results}
In this part of the paper, we present our main results, starting with means inequalities that will be used later to obtain inequalities for operator monotone functions and positive linear maps.
\subsection{Operator means inequalities}
We begin with the following new reverse of \eqref{6}.
\begin{theorem}\label{19}
Let $A,B$ be  positive operators such that $sA\le B\le tA$ for some scalars $0<s\le t$. Then for any $v\in \left[ 0,1 \right]$,
\begin{equation}\label{12}
\frac{1}{\xi }A{{\nabla }_{v}}B\le A{{\sharp}_{v}}B\le \psi A{{!}_{v}}B,
\end{equation}
where $\xi =\max \left\{ \frac{\left( 1-v \right)+vs}{{{s}^{v}}},\frac{\left( 1-v \right)+vt}{{{t}^{v}}} \right\}$ and $\psi =\max \left\{ {{s}^{v}} \left(\left( 1-v \right)+\frac{v}{s}\right),{{t}^{v}} \left( \left( 1-v \right)+\frac{v}{t}\right) \right\}$.
\end{theorem}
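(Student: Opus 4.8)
The plan is to collapse the operator inequality \eqref{12} to a pair of one-variable scalar inequalities by exploiting the fact that all three means share a common congruence structure. First I would set $X=A^{-1/2}BA^{-1/2}$. Since $A^{-1/2}$ is positive and invertible, the hypothesis $sA\le B\le tA$ is equivalent, after congruence, to $sI\le X\le tI$, so the spectrum of $X$ lies in $[s,t]$. Using the transformer (congruence) property of operator means, $C^{*}(A'\,\sigma\,B')C=(C^{*}A'C)\,\sigma\,(C^{*}B'C)$ with $C=A^{1/2}$, each weighted mean can be written as
\[
A\nabla_v B=A^{1/2}\big((1-v)I+vX\big)A^{1/2},\qquad A\sharp_v B=A^{1/2}X^{v}A^{1/2},\qquad A!_v B=A^{1/2}\big((1-v)I+vX^{-1}\big)^{-1}A^{1/2}.
\]

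The key observation is that the three inner factors are all functions of the single operator $X$, hence mutually commute. Conjugating \eqref{12} by $A^{-1/2}$ and using that $C\le D$ iff $T^{*}CT\le T^{*}DT$ for invertible $T$, the left inequality becomes $(1-v)I+vX\le\xi X^{v}$ and the right inequality becomes $X^{v}\le\psi\big((1-v)I+vX^{-1}\big)^{-1}$. Because each side is a continuous function of $X$ alone, the functional calculus reduces these to the scalar inequalities $(1-v)+vx\le\xi x^{v}$ and $x^{v}\le\psi\big((1-v)+vx^{-1}\big)^{-1}$ for every $x\in[s,t]$.

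It then remains to treat each scalar inequality as an elementary optimization. Rearranging, the first requires $g(x):=\frac{(1-v)+vx}{x^{v}}\le\xi$ and the second $h(x):=(1-v)x^{v}+vx^{v-1}\le\psi$ on $[s,t]$. Differentiating gives $g'(x)=v(1-v)x^{-v-1}(x-1)$ and $h'(x)=v(1-v)x^{v-2}(x-1)$, so for $v\in(0,1)$ both $g$ and $h$ are strictly decreasing on $(0,1)$ and strictly increasing on $(1,\infty)$, attaining the common minimum value $1$ at $x=1$. Consequently each function is maximized over $[s,t]$ at one of its endpoints, which is exactly how $\xi=\max\{g(s),g(t)\}$ and $\psi=\max\{h(s),h(t)\}$ are defined; the degenerate cases $v\in\{0,1\}$ are immediate since then all three means coincide and $\xi=\psi=1$.

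The main obstacle I anticipate is not the scalar calculus, which is routine once $g'$ and $h'$ are seen to vanish only at $x=1$, but rather the clean setting up of the reduction: one must confirm that the harmonic mean genuinely is a function of the same operator $X$ via $A!_v B=A^{1/2}(I!_v X)A^{1/2}$, so that the functional-calculus step is legitimate, and one must track that the congruence by $A^{-1/2}$ preserves the direction of each inequality. Once the problem is recognized to live on the spectrum of a single operator, the sign analysis of the two derivatives carries out the rest.
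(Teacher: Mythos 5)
Your proposal is correct and follows essentially the same route as the paper: reduce to scalar inequalities for $f_v(x)=\frac{(1-v)+vx}{x^v}$ and $g_v(x)=x^v\left((1-v)+\frac{v}{x}\right)$ via functional calculus applied to $X=A^{-1/2}BA^{-1/2}$, then bound each function on $[s,t]$ by its endpoint values using the sign of the derivative. Your write-up is merely more explicit than the paper's about the congruence reduction for all three means and about why the endpoint maximum is legitimate (the derivative vanishes only at $x=1$), details the paper leaves implicit.
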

\begin{proof}
Define
\[{{f}_{v}}\left( x \right)=\frac{\left( 1-v \right)+vx}{{{x}^{v}}},\quad\text{ where }0<s\le x\le t\text{ and }v\in \left[ 0,1 \right].\]
Direct calculations show that $f'(x)=v(1-v)(x-1)x^{-v-1}.$ Since $f$  is continuous on the interval $\left[ s,t \right]$, ${{f}_{v}}\left( x \right)\le \max \left\{ {{f}_{v}}\left( s \right),{{f}_{v}}\left( t \right) \right\}$. Whence,
\begin{equation}\label{7}
\left( 1-v \right)+vx\le \xi {{x}^{v}}.
\end{equation}
Now utilizing inequality \eqref{7} with $T={{A}^{-\frac{1}{2}}}B{{A}^{-\frac{1}{2}}}$ and applying a standard functional calculus argument, we obtain the first inequality in \eqref{12}.

The second one follows by applying similar arguments to the function
\[{{g}_{v}}\left( x \right)={{x}^{v}}\left( \left( 1-v \right)+\frac{v}{x} \right),\quad\text{ where }0<s\le x\le t\text{ and }v\in \left[ 0,1 \right].\]
\end{proof}

For the functions $f_v$ and $g_v$ defined in Theorem \ref{19}, we have the following inequalities that will be used later.
\begin{lemma}\label{21}
	Let $f_v$ and $g_v$  be the functions defined in the proof of Theorem \ref{19} for $x >0$. Then we have the following properties.
	\begin{itemize}
		\item[(i)] For $0< x \le 1$ and $0\le v \le \frac{1}{2}$, we have $f_v(x)\le f_v\left(\frac{1}{x}\right)$ and $g_v(x)\ge g_v\left(\frac{1}{x}\right)$.
		\item[(ii)] For $0< x \le 1$ and $\frac{1}{2}\le v \le 1$, we have $f_v(x)\ge f_v\left(\frac{1}{x}\right)$ and $g_v(x)\le g_v\left(\frac{1}{x}\right)$.
	\end{itemize}
\end{lemma}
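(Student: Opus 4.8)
The plan is to first record a reciprocity between the two functions that collapses all four stated inequalities into essentially one comparison. A direct substitution $x\mapsto 1/x$ in the definitions gives
\[
f_v\!\left(\tfrac1x\right)=\frac{(1-v)+v/x}{(1/x)^v}=x^v\!\left((1-v)+\frac{v}{x}\right)=g_v(x),
\qquad
g_v\!\left(\tfrac1x\right)=f_v(x).
\]
Hence $f_v(x)\le f_v(1/x)$ is literally $f_v(x)\le g_v(x)$, while $g_v(x)\ge g_v(1/x)$ is again $g_v(x)\ge f_v(x)$; so both inequalities in (i) assert the single fact $f_v(x)\le g_v(x)$, and both inequalities in (ii) assert the reverse $f_v(x)\ge g_v(x)$. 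It therefore suffices to determine the sign of $D_v(x):=g_v(x)-f_v(x)$ for $0<x\le 1$.

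Next I would expand and factor $D_v$. Writing $f_v(x)=(1-v)x^{-v}+vx^{1-v}$ and $g_v(x)=(1-v)x^{v}+vx^{-(1-v)}$, one gets
\[
D_v(x)=(1-v)\bigl(x^v-x^{-v}\bigr)-v\bigl(x^{1-v}-x^{-(1-v)}\bigr).
\]
Since $0<x\le 1$, I put $x=e^{-w}$ with $w\ge 0$; then $x^a-x^{-a}=-2\sinh(aw)$, so
\[
D_v(x)=2\Bigl[v\sinh\bigl((1-v)w\bigr)-(1-v)\sinh(vw)\Bigr].
\]
For $0\le v\le\tfrac12$ the goal $D_v(x)\ge 0$ becomes $\dfrac{\sinh((1-v)w)}{1-v}\ge\dfrac{\sinh(vw)}{v}$, and this reduces to the monotonicity of $t\mapsto \sinh(t)/t$ on $(0,\infty)$, which is immediate from its even power series $\sum_{n\ge0}t^{2n}/(2n+1)!$. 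Because $(1-v)w\ge vw$, the larger argument yields the larger value, which settles (i).

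Finally, the range $\tfrac12\le v\le 1$ follows for free from the antisymmetry $D_{1-v}(x)=-D_v(x)$, visible directly from the $\sinh$-formula. Replacing $v$ by $1-v\in[0,\tfrac12]$ turns $D_{1-v}(x)\ge 0$ into $D_v(x)\le 0$, i.e. $f_v(x)\ge g_v(x)$, which is (ii). The only genuinely substantive step is spotting the reciprocity $f_v(1/x)=g_v(x)$; once that is in hand, everything reduces to the elementary monotonicity of $\sinh(t)/t$ and a single sign flip. I expect the only care to be needed at the boundary cases $v\in\{0,\tfrac12,1\}$ and $x=1$, where $D_v(x)=0$ and the claimed inequalities degenerate to equalities.
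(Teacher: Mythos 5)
Your proof is correct, but it takes a genuinely different route from the paper. The paper works directly with $F_v(x)=f_v(x)-f_v\left(\tfrac1x\right)$ and $G_v(x)=g_v(x)-g_v\left(\tfrac1x\right)$, computes both derivatives explicitly, $\frac{dF_v}{dx}=\frac{v(1-v)(x-1)(1-x^{2v-1})}{x^{v+1}}$ and $\frac{dG_v}{dx}=\frac{v(1-v)(x-1)(x^{2v-1}-1)}{x^{v+1}}$, and settles the four inequalities by a sign analysis together with $F_v(1)=G_v(1)=0$. You instead observe the reciprocity $f_v\left(\tfrac1x\right)=g_v(x)$ and $g_v\left(\tfrac1x\right)=f_v(x)$, which collapses all four statements to the single sign question for $D_v(x)=g_v(x)-f_v(x)$; the substitution $x=e^{-w}$ turns this into $D_v=2\left[v\sinh\bigl((1-v)w\bigr)-(1-v)\sinh(vw)\right]$, decided by the power-series monotonicity of $t\mapsto\sinh(t)/t$, with case (ii) obtained for free from the antisymmetry $D_{1-v}=-D_v$. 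Your approach buys structural insight the paper leaves implicit: since $F_v=-G_v$ (a consequence of your reciprocity), the paper's two derivative computations are in fact redundant, and your symmetry argument eliminates the second case entirely, all without any differentiation. The paper's calculus route, on the other hand, is a shorter routine verification requiring no change of variables. One small notational point: in your displayed reduction $\frac{\sinh((1-v)w)}{1-v}\ge\frac{\sinh(vw)}{v}$ you should divide once more by $w>0$ to literally invoke monotonicity of $\sinh(t)/t$ at $t=(1-v)w\ge vw$ (and note $v\in\{0,1\}$, $w=0$ separately as degenerate equalities, which you do acknowledge); this is cosmetic, not a gap.
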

\begin{proof}
	We set $F_v(x)\equiv f_v(x)-f_v\left(\frac{1}{x}\right)$ and $G_v(x)\equiv g_v(x)-g_v\left(\frac{1}{x}\right)$ for $x >0$. Then we calculate $\frac{dF_v(x)}{dx}=\frac{v(1-v)(x-1)(1-x^{2v-1})}{x^{v+1}}$ and $\frac{dG_v(x)}{dx}=\frac{v(1-v)(x-1)(x^{2v-1}-1)}{x^{v+1}}$.
	Then a standard calculus argument implies the four implications. 
\end{proof}
The next result is a consequence of Theorem \ref{19} with $s=\frac{m}{M}$ and $t=\frac{M}{m}$. This implies a more familiar form of means inequalities. 
\begin{corollary}\label{10}
Let $A,B$ be  positive operators such that $mI\le A,B\le MI$ for some scalars $0<m<M$. Then
\begin{equation}\label{4}
\frac{m{{\sharp}_{\lambda }}M}{m{{\nabla }_{\lambda }}M}A{{\nabla }_{v}}B\le A{{\sharp}_{v}}B\le \frac{m{{\sharp}_{\mu }}M}{m{{!}_{\mu }}M}A{{!}_{v}}B,
\end{equation}
where $\lambda =\min \left\{ v,1-v \right\}$, $\mu =\max \left\{ v,1-v \right\}$, and $v\in \left[ 0,1 \right]$.
\end{corollary}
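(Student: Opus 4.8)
The plan is to read off \eqref{4} directly from Theorem \ref{19} specialized to $s=\frac{m}{M}$ and $t=\frac{M}{m}$, converting the abstract constants $\xi$ and $\psi$ into the stated scalar means with the aid of Lemma \ref{21}. First I would verify the sandwich hypothesis of Theorem \ref{19}: from $A\le MI$ and $B\ge mI$ we obtain $\frac{m}{M}A\le mI\le B$, while from $A\ge mI$ and $B\le MI$ we obtain $B\le MI\le \frac{M}{m}A$; hence $sA\le B\le tA$ with $s=\frac{m}{M}$ and $t=\frac{M}{m}$. Theorem \ref{19} then yields $\frac{1}{\xi}A\nabla_v B\le A\sharp_v B\le \psi A!_v B$ with $\xi=\max\{f_v(s),f_v(t)\}$ and $\psi=\max\{g_v(s),g_v(t)\}$.

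The crux is to evaluate these two maxima explicitly. Since $0<m<M$, the point $x:=\frac{m}{M}$ lies in $(0,1)$ and satisfies $\frac{1}{x}=\frac{M}{m}$, so Lemma \ref{21} applies at this $x$. For $v\le\frac{1}{2}$, part (i) gives $f_v(s)\le f_v(t)$ and $g_v(s)\ge g_v(t)$, so that $\xi=f_v(t)$ and $\psi=g_v(s)$; for $v\ge\frac{1}{2}$, part (ii) reverses both inequalities, giving $\xi=f_v(s)$ and $\psi=g_v(t)$. Thus Lemma \ref{21} pins down exactly which endpoint realizes each maximum in each parity regime.

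It then remains to rewrite the relevant endpoint values as scalar means. A direct computation gives $f_v(s)=\frac{(1-v)M+vm}{m^{v}M^{1-v}}$ and $f_v(t)=\frac{(1-v)m+vM}{m^{1-v}M^{v}}$, and since $g_v(x)=f_v(1/x)$ one also has $g_v(s)=f_v(t)$ and $g_v(t)=f_v(s)$. Recognizing $m^{1-r}M^{r}=m\sharp_r M$, $(1-r)m+rM=m\nabla_r M$, and $\frac{mM}{(1-r)M+rm}=m!_r M$, the case $v\le\frac{1}{2}$ (where $\lambda=v$, $\mu=1-v$) and the case $v\ge\frac{1}{2}$ (where $\lambda=1-v$, $\mu=v$) both collapse to $\frac{1}{\xi}=\frac{m\sharp_\lambda M}{m\nabla_\lambda M}$ and $\psi=\frac{m\sharp_\mu M}{m!_\mu M}$, which is precisely \eqref{4}. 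I expect no analytic difficulty at this stage; the only genuine work is bookkeeping, namely confirming that the two parity cases merge into the single pair of formulas through the substitutions $\lambda=\min\{v,1-v\}$ and $\mu=\max\{v,1-v\}$, and keeping the exponents and weights straight when passing between the scalar ratios and the weighted means.
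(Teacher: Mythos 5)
Your proposal is correct and takes essentially the same route as the paper: specialize Theorem \ref{19} to $s=\frac{m}{M}$, $t=\frac{M}{m}$, invoke Lemma \ref{21} at $x=\frac{m}{M}\in(0,1)$ to decide which endpoint attains each maximum in the two regimes $v\le\frac{1}{2}$ and $v\ge\frac{1}{2}$, and rewrite the endpoint values of $f_v$ and $g_v$ as ratios of weighted scalar means, merging the two cases via $\lambda=\min\{v,1-v\}$ and $\mu=\max\{v,1-v\}$. Your explicit check of the sandwich hypothesis $\frac{m}{M}A\le B\le\frac{M}{m}A$ is a detail the paper leaves implicit, but otherwise the argument coincides with the paper's proof.
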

\begin{proof}
Taking $s=\frac{m}{M}$ and $t=\frac{M}{m}$ in Theorem \ref{19}, we infer that
\[\frac{1}{\xi }A{{\nabla }_{v}}B\le A{{\sharp}_{v}}B\le \psi A{{!}_{v}}B,\]
where $\xi =\max \left\{ \frac{M{{\nabla }_{v}}m}{M{{\sharp}_{v}}m},\frac{m{{\nabla }_{v}}M}{m{{\sharp}_{v}}M} \right\}$ and $\psi =\max \left\{ \frac{M{{\sharp}_{v}}m}{M{{!}_{v}}m},\frac{m{{\sharp}_{v}}M}{m{{!}_{v}}M} \right\}$. On account of Lemma \ref{21} we get 
\[\xi =\left\{ \begin{array}{lr}
 \frac{m{{\nabla }_{v}}M}{m{{\sharp}_{v}}M}&\text{ if }v\in \left[ 0,\frac{1}{2} \right] \\ 
 \frac{M{{\nabla }_{v}}m}{M{{\sharp}_{v}}m}&\text{ if }v\in \left[ \frac{1}{2},1 \right] \\ 
\end{array} \right.\text{ }\equiv\frac{m{{\nabla }_{\lambda }}M}{m{{\sharp}_{\lambda }}M},\quad\text{ where }\lambda =\min \left\{ v,1-v \right\},\]
and
\[\psi =\left\{ \begin{array}{lr}
 \frac{M{{\sharp}_{v}}m}{M{{!}_{v}}m}&\text{ if }v\in \left[ 0,\frac{1}{2} \right] \\ 
 \frac{m{{\sharp}_{v}}M}{m{{!}_{v}}M}&\text{ if }v\in \left[ \frac{1}{2},1 \right] \\ 
\end{array} \right.\text{ }\equiv \frac{m{{\sharp}_{\mu }}M}{m{{!}_{\mu }}M},\quad\text{ where }\mu =\max \left\{ v,1-v \right\}.\]
\end{proof}

We would like to emphasize that \eqref{4} is an extension of \cite[Theorem 13]{1} to the weighted means (thanks to $\frac{M\sharp m}{M!m}= \frac{M\nabla m}{M\sharp m}$). We also remark that Corollary \ref{10} has been shown recently in \cite{submitted}.

Before proceeding further, we present the following remark about the powers of operator inequalities.

\begin{remark}
From Corollary  \ref{10}, we have the well known inequality 
\begin{equation}\label{needed_power}
\Phi\left(A\nabla B\right)\leq \frac{m\nabla M}{m\sharp M} \Phi(A\sharp B),
\end{equation}
 for a positive linear map $\Phi$ and $0<mI\le A,B\le MI$. 

It is well known that the mapping $t\mapsto t^2$ is not operator monotone, and hence one cannot simply square both sides of \eqref{needed_power}. For this, Lin proposed an elegant method for such a process, see \cite{lin_studia, lin_jmaa}. The technique proposed in these references was then used by several authors to present powers of operator inequalities. In particular, it is shown in \cite{fu} that one can take the $p-$ power of \eqref{needed_power} as follows
\begin{equation}\label{needed_power_2}
\Phi\left(A\nabla B\right)^p\leq \left(\frac{(m+M)^2}{4^{\frac{2}{p}}mM}\right)^{p} \Phi(A\sharp B)^p, \quad p\geq 2.
\end{equation}
When $p=2$, this gives the same conclusion as Lin's.\\

In this remark, we follow a simple approach to obtain these inequalities. For this, we need to remind the reader of the following fact, see \cite[Theorem 2, p. 204]{furuta}: If $A>0$ and $mI\leq B\leq MI$, we have
\begin{equation}\label{needed_power_3}
B\leq A\Rightarrow B^p\leq \frac{\left(M^{p-1}+m^{p-1}\right)^2}{4m^{p-1}M^{p-1}}A^p,\quad p\geq 2.
\end{equation}
Now applying \eqref{needed_power_3} on \eqref{needed_power}, we obtain 
\begin{equation}\label{needed_power_4}
\Phi(A\nabla B)^p\leq \frac{\left(M^{p-1}+m^{p-1}\right)^2}{4m^{p-1}M^{p-1}}\left(\frac{M+m}{2\sqrt{mM}}\right)^p\Phi(A\sharp B)^p,\quad p\geq 2.
\end{equation}
When $p=2,$ we obtain the same squared version as Lin's inequality.  However, for other values  of $p$, it is interesting to compare \eqref{needed_power_2} with \eqref{needed_power_4}. For this end, one can define the function
$$f_p(x)=\left(\frac{(x+1)^2}{4^{2/p}x}\right)^p-\frac{\left(1+x^{p-1}\right)^2}{4x^{p-1}}\left(\frac{1+x}{2\sqrt{x}}\right)^p,\quad x\geq 1, p\geq 2.$$
Direct calculations show that $f_{2.5}(7)>0$ while $f_5(8)<0$, which means that neither \eqref{needed_power_2} nor \eqref{needed_power_4} is uniformly better than the other. However, this entails the following refinement of \eqref{needed_power_2} and \eqref{needed_power_4}:
$$\Phi(A\nabla B)^p\leq \eta\; \Phi(A\sharp B)^p,$$ where
$$\eta=\min\left\{\left(\frac{(m+M)^2}{4^{\frac{2}{p}}mM}\right)^{p},\frac{\left(M^{p-1}+m^{p-1}\right)^2}{4m^{p-1}M^{p-1}}\left(\frac{M+m}{2\sqrt{mM}}\right)^p\right\}.$$
In a similar manner we can also obtain the following  inequality:
\[\Phi {{\left( A\nabla B \right)}^{p}}\le \eta {{\left( \Phi \left( A \right)\sharp\Phi \left( B \right) \right)}^{p}}.\]
It should be noted that all inequalities in this article can be powered in the same way as above.
\end{remark}

\begin{proposition}\label{8}
	Let $A,B$ be two positive operators and $m_1,m_2,M_1,M_2$ be positive real numbers, satisfying  $0<m_2I\le A\le m_1I<M_1I\le B\le M_2I$. Then, for $0\leq v\leq 1$,
		\begin{equation}\label{1}
		\frac{m_2{{\sharp}_{v}}M_2}{m_2{{\nabla }_{v}}M_2}A{{\nabla }_{v}}B\le A{{\sharp}_{v}}B\le \frac{m_1{{\sharp}_{v}}M_1}{m_1{{\nabla }_{v}}M_1}A{{\nabla }_{v}}B,
		\end{equation}
		and
		\begin{equation}\label{2}
		\frac{m_1{{\sharp}_{v}}M_1}{m_1{{!}_{v}}M_1}A{{!}_{v}}B\le A{{\sharp}_{v}}B\le \frac{m_2{{\sharp}_{v}}M_2}{m_1{{!}_{v}}M_2}A{{!}_{v}}B.
		\end{equation}
		
\end{proposition}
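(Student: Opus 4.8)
The plan is to reduce both \eqref{1} and \eqref{2} to scalar estimates for the functions $f_v$ and $g_v$ of Theorem~\ref{19}, evaluated on the spectrum of the single positive operator $T=A^{-1/2}BA^{-1/2}$, and then to transport these scalar estimates to operators by functional calculus followed by the order-preserving congruence $X\mapsto A^{1/2}XA^{1/2}$. The new feature, absent from Theorem~\ref{19}, is that the present sandwich hypotheses force the spectrum of $T$ to lie \emph{strictly above} $1$, where $f_v$ and $g_v$ are monotone; this is precisely what upgrades the one-sided reverses of Theorem~\ref{19} into the two-sided bounds claimed here.

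First I would localize the spectrum of $T$. For a unit vector $\xi$ put $y=A^{-1/2}\xi$, so that $\langle T\xi,\xi\rangle=\langle By,y\rangle$ and $\|y\|^2=\langle A^{-1}\xi,\xi\rangle$. The hypothesis $m_2I\le A\le m_1I$ gives $\frac{1}{m_1}\le\|y\|^2\le\frac{1}{m_2}$, and $M_1I\le B\le M_2I$ gives $M_1\|y\|^2\le\langle By,y\rangle\le M_2\|y\|^2$. Combining these yields
\[ \frac{M_1}{m_1}I\le T\le\frac{M_2}{m_2}I, \]
and since $m_1<M_1$ the whole interval $\left[\frac{M_1}{m_1},\frac{M_2}{m_2}\right]$ is contained in $(1,\infty)$.

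Next I would use monotonicity. From the proof of Theorem~\ref{19} we have $f_v'(x)=v(1-v)(x-1)x^{-v-1}$, and a one-line computation gives $g_v'(x)=v(1-v)(x-1)x^{v-2}$; both are positive for $x>1$, so $f_v$ and $g_v$ increase on $\left[\frac{M_1}{m_1},\frac{M_2}{m_2}\right]$. Using $f_v(x)x^v=(1-v)+vx$ together with $f_v\!\left(\frac{M_1}{m_1}\right)\le f_v(x)\le f_v\!\left(\frac{M_2}{m_2}\right)$ on this interval, applying functional calculus with $x\to T$, and then conjugating by $A^{1/2}$ (which preserves $\le$), I would get, via the standard identities $A\nabla_v B=A^{1/2}(I\nabla_v T)A^{1/2}$ and $A\sharp_v B=A^{1/2}T^vA^{1/2}$,
\[ f_v\!\left(\tfrac{M_1}{m_1}\right)\left(A\sharp_v B\right)\le A\nabla_v B\le f_v\!\left(\tfrac{M_2}{m_2}\right)\left(A\sharp_v B\right). \]
Rearranging and recognizing $f_v\!\left(\frac{M_j}{m_j}\right)=\frac{m_j\nabla_v M_j}{m_j\sharp_v M_j}$ gives \eqref{1}. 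The identical argument applied to $g_v(x)(I!_v x)=x^v$, using $A!_v B=A^{1/2}(I!_v T)A^{1/2}$, produces the two-sided harmonic-mean bound of \eqref{2} with coefficients $g_v\!\left(\frac{M_j}{m_j}\right)=\frac{m_j\sharp_v M_j}{m_j!_v M_j}$.

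The only genuinely substantive step is the spectral localization $\frac{M_1}{m_1}I\le T\le\frac{M_2}{m_2}I$ and the remark that it lies in $(1,\infty)$; once that is in hand, the rest is the monotonicity of two explicit scalar functions together with the order-preserving congruence, both routine. I would, however, take care with the endpoint bookkeeping in \eqref{2}: the lower coefficient must come from the \emph{left} endpoint $\frac{M_1}{m_1}$ (yielding $\frac{m_1\sharp_v M_1}{m_1!_v M_1}$) and the upper coefficient from the \emph{right} endpoint $\frac{M_2}{m_2}$, so that both constants are honest values of the increasing function $g_v$.
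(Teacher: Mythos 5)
Your proposal is correct and follows essentially the same route as the paper: localize the spectrum of $T=A^{-1/2}BA^{-1/2}$ in $\left[\frac{M_1}{m_1},\frac{M_2}{m_2}\right]\subset(1,\infty)$, exploit monotonicity of explicit scalar functions there (the paper uses the decreasing reciprocal $\hat{f}_v(x)=\frac{x^v}{(1-v)+vx}$ where you use the increasing $f_v$ itself, a trivial difference, and your quadratic-form localization of $T$ just makes explicit what the paper's substitution step assumes), and transfer to operators via functional calculus and congruence by $A^{1/2}$. One point in your favor: your upper coefficient $\frac{m_2\sharp_v M_2}{m_2!_v M_2}$ in \eqref{2} is exactly what the paper's own proof yields, so the $m_1{!_v}M_2$ in the printed statement is evidently a typographical slip, as its use in Corollary \ref{c8} confirms.
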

\begin{proof}
	Define 
	\[{{\hat{f}}_{v}}\left( x \right)=\frac{{{x}^{v}}}{\left( 1-v \right)+vx},\quad\text{ where }\frac{M_1}{m_1}\le x\le \frac{M_2}{m_2}\text{ and }v\in \left[ 0,1 \right].\] 
	A simple calculation reveals that ${{\hat{f}}_{v}}\left( x \right)$ is monotone decreasing when $x\ge 1$ (and increasing when $0<x\le 1$). Since $\frac{M_1}{m_1}\ge 1$, ${{\hat{f}}_{v}}\left( x \right)$ attains its maximum  at $x=\frac{M_1}{m_1}$ and minimum  at $x=\frac{M_2}{m_2}$. Thus we can write
	\[{{\hat{f}}_{v}}\left( \frac{M_2}{m_2} \right)\le {{\hat{f}}_{v}}\left( x \right)\le {{\hat{f}}_{v}}\left( \frac{M_1}{m_1} \right),\]
	whenever $\frac{M_1}{m_1}\le x\le \frac{M_2}{m_2}$. By replacing $x$ with ${{A}^{-\frac{1}{2}}}B{{A}^{-\frac{1}{2}}}$ and applying a standard functional calculus argument, we get the desired inequalites in \eqref{1}.
	
	For \eqref{2}, define 
	\[{{g}_{v}}\left( x \right)=\left( 1-v \right){{x}^{v}}+v{{x}^{v-1}},\quad\text{ where }\frac{M_1}{m_1}\le x\le \frac{M_2}{m_2}\text{ and }v\in \left[ 0,1 \right].\] 
	A simple calculation shows that $g_{v}^{'}\left( x \right)=v\left( 1-v \right){{x}^{v-2}}\left( x-1 \right)$. Thus we obtain similarly
	\[{{g}_{v}}\left( \frac{M_1}{m_1} \right)\le {{g}_{v}}\left( x \right)\le {{g}_{v}}\left( \frac{M_2}{m_2} \right),\]
	whenever $\frac{M_1}{m_1}\le x\le \frac{M_2}{m_2}$. Then, an argument similar to the above implies \eqref{2}.
\end{proof}

\begin{remark} We comment on the sharpness of our results compared to some known results in the literature.
\begin{enumerate}
	\item The constants in Theorem \ref{19}, Corollary \ref{10}, and Proposition \ref{8} are best possible in the sense that smaller quantities cannot replace them. As a matter of fact, $f_v$ and $g_v$ are continuous functions on $0< s \leq x \leq t$, so that $f_v(x) \leq f_v(t)$ is a sharp inequality for example. For  the reader convenience, we will show that our results are stronger than the inequalities obtained in \cite{3} and \cite{01}.
	
	On account of \cite[Corollary 3]{01}, if $a,b>0$ and $v\in \left[ 0,1 \right]$, we have
	\[K{{\left( t \right)}^{r}}a{{\sharp}_{v}}b\le a{{\nabla }_{v}}b,\] 
	where $r=\min \left\{ v,1-v \right\}$ and $t=\frac{b}{a}$. Letting  $a=m_1$ and $b=M_1$ we get
	\begin{equation*}
	\frac{m_1{{\sharp}_{v}}M_1}{m_1{{\nabla }_{v}}M_1}\le \frac{1}{K{{\left( \frac{M_1}{m_1} \right)}^{r}}}.
	\end{equation*}
	
	In addition, Liao et al. in \cite[Corollary 2.2]{3} proved that
	\[a{{\nabla }_{v}}b\le K{{\left( t \right)}^{R}}a{{\sharp}_{v}}b,\] 
	where $R=\max \left\{ v,1-v \right\}$. By choosing $a=m_2$ and $b=M_2,$ we have
	\begin{equation*}
	\frac{m_2{{\nabla }_{v}}M_2}{m_2{{\sharp}_{v}}M_2}\le K{{\left( \frac{M_2}{m_2} \right)}^{R}}.
	\end{equation*}
	\item 	The assumption on $A$ and $B$ ( $sA \leq B \le tA$ in Theorem \ref{19}) is more general  than $mI \le A,B \le MI$ in Corollary \ref{10} and the conditions (i) or (ii) in Lemma \ref{8}.
	The conditions (i) or (ii) in Lemma \ref{8} imply $I \le \frac{M_1}{m_1}I\le A^{-1/2}BA^{-1/2} \le \frac{M_2}{m_2}I$ which is a special case of $0< sI \leq A^{-1/2}BA^{-1/2} \le tI$ with $s=\frac{M_1}{m_1}$ and $t=\frac{M_2}{m_2}$.
\end{enumerate}
\end{remark}

An application of Proposition \ref{8} under positive linear maps is given  as follows.
\begin{corollary}\label{corollary2_2}
Let $A,B$ be two positive operators, $\Phi$ be a unital positive linear map, $v\in \left[ 0,1 \right]$ and $m_1,m_2,M_1,M_2$ be positive real numbers.
\begin{itemize}
	\item[(i)] If $0<m_2I\leq A \leq m_1I \leq M_1I \leq B \leq M_2I$, then
	\begin{equation}\label{remark1_3_ineq01}
	\Phi(A)\sharp_v\Phi(B) \leq \frac{m_1\sharp_vM_1}{m_2\sharp_v M_2}\frac{m_2\nabla_vM_2}{m_1\nabla_vM_1}\Phi(A\sharp_vB).
	\end{equation}
	\item[(ii)] If $0<m_2I\leq B \leq m_1I \leq M_1I \leq A \leq M_2I$, then
	\begin{equation}\label{remark1_3_ineq02}
	\Phi(A)\sharp_v\Phi(B) \leq \frac{M_1\sharp_vm_1}{M_2\sharp_v m_2}\frac{M_2\nabla_vm_2}{M_1\nabla_vm_1}\Phi(A\sharp_vB).
	\end{equation}
\end{itemize}
\end{corollary}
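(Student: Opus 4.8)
The plan is to derive both inequalities from Proposition \ref{8}, applied twice: once to the original pair $A,B$ and once to their images $\Phi(A),\Phi(B)$. The only properties of $\Phi$ that enter are linearity (so that $\Phi\left(A\nabla_vB\right)=\Phi(A)\nabla_v\Phi(B)$ and scalar factors pull out) together with the fact that a unital positive map preserves operator bounds.

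For part (i), I would first apply $\Phi$ to the left inequality in \eqref{1}. Since $\Phi$ preserves order and is linear, the scalar factor comes out and I obtain
\[
\frac{m_2\sharp_vM_2}{m_2\nabla_vM_2}\,\Phi(A)\nabla_v\Phi(B)\le \Phi\left(A\sharp_vB\right),
\]
using $\Phi\left(A\nabla_vB\right)=\Phi(A)\nabla_v\Phi(B)$. Next, because $\Phi$ is unital and positive, the bounds $m_2I\le A\le m_1I$ and $M_1I\le B\le M_2I$ pass to $m_2I\le\Phi(A)\le m_1I$ and $M_1I\le\Phi(B)\le M_2I$, so the pair $\Phi(A),\Phi(B)$ again meets the hypotheses of Proposition \ref{8}. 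Applying the right inequality in \eqref{1} to these image operators gives
\[
\Phi(A)\sharp_v\Phi(B)\le\frac{m_1\sharp_vM_1}{m_1\nabla_vM_1}\,\Phi(A)\nabla_v\Phi(B).
\]
Solving the first display for $\Phi(A)\nabla_v\Phi(B)$ and substituting into the second yields \eqref{remark1_3_ineq01}, since $\frac{m_1\sharp_vM_1}{m_1\nabla_vM_1}\cdot\frac{m_2\nabla_vM_2}{m_2\sharp_vM_2}=\frac{m_1\sharp_vM_1}{m_2\sharp_vM_2}\cdot\frac{m_2\nabla_vM_2}{m_1\nabla_vM_1}$.

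For part (ii), instead of repeating the computation I would reduce to part (i) through the symmetries of the weighted means. Under the hypotheses of (ii) the pair $(B,A)$ satisfies exactly the hypotheses of (i) with the same $m_1,m_2,M_1,M_2$, so applying (i) to $(B,A)$ with weight $1-v$ gives $\Phi(B)\sharp_{1-v}\Phi(A)\le c\,\Phi\left(B\sharp_{1-v}A\right)$, where $c=\frac{m_1\sharp_{1-v}M_1}{m_2\sharp_{1-v}M_2}\cdot\frac{m_2\nabla_{1-v}M_2}{m_1\nabla_{1-v}M_1}$. I would then rewrite everything via $X\sharp_{1-v}Y=Y\sharp_vX$, $a\sharp_{1-v}b=b\sharp_va$ and $a\nabla_{1-v}b=b\nabla_va$; this turns the left-hand side into $\Phi(A)\sharp_v\Phi(B)$, the right-hand factor into $\Phi\left(A\sharp_vB\right)$, and converts $c$ into precisely the constant appearing in \eqref{remark1_3_ineq02}.

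The only genuinely substantive step is the observation in part (i) that $\Phi(A)$ and $\Phi(B)$ inherit the scalar bounds of $A$ and $B$, which is what permits Proposition \ref{8} to be reused on the image operators; the rest is order-preservation, linearity, and bookkeeping with the scalar means. A minor technical point to address is the borderline case $m_1=M_1$, where Proposition \ref{8} is stated with the strict inequality $m_1I<M_1I$; this is handled either by noting that the bounds still force the required monotonicity of $\hat f_v$ and $g_v$ on the relevant interval, or by a routine limiting argument.
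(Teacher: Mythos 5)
Your proposal is correct and follows essentially the same route as the paper: both proofs combine the left-hand inequality of Proposition \ref{8} (pushed through $\Phi$ by positivity and linearity) with the right-hand inequality applied to the pair $\Phi(A),\Phi(B)$, which inherits the scalar bounds precisely because $\Phi$ is unital and positive. The only cosmetic difference is that the paper proves (ii) directly, re-deriving the order-reversed sandwich inequality from the monotonicity of $\hat{f}_v$ on $(0,1]$, whereas you obtain (ii) from (i) via the symmetries $X\sharp_{1-v}Y=Y\sharp_v X$ and $a\nabla_{1-v}b=b\nabla_v a$; your remark on the borderline case $m_1=M_1$ (not covered by the strict hypothesis of Proposition \ref{8}, though the paper's argument handles it implicitly) is a harmless extra precaution.
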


\begin{proof}
We prove (ii). As we noted in the proof of Proposition \ref{8}, $\hat{f}_v(x)$ is increasing on $0<x \leq 1$,
we have
$$
\hat{f}_v\left(\frac{m_2}{M_2}\right) \le \hat{f}_v(x) \le \hat{f}_v\left(\frac{m_1}{M_1}\right)
$$ 
whenever $\frac{m_2}{M_2} \leq x \leq \frac{m_1}{M_1} (\leq 1)$.
Thus we have
$$
\frac{M_2\sharp_vm_2}{M_2\nabla_vm_2}A\nabla_vB \leq A \sharp_vB \leq \frac{M_1\sharp_vm_1}{M_1\nabla_vm_1}A\nabla_vB.
$$
Replacing $A$ and $B$ by $\Phi(A)$ and $\Phi(B)$ in the second inequality above and taking $\Phi$ of both sides in the first inequality above, we obtain
$$
\Phi(A)\sharp_v\Phi(B) \leq \frac{M_1\sharp_vm_1}{M_1\nabla_vm_1}\Phi(A\nabla_vB),\,\,\,\, \frac{M_2\sharp_vm_2}{M_2\nabla_vm_2}\Phi(A\nabla_vB) \leq \Phi(A \sharp_vB). 
$$
Combining these inequalities, we have the desired result.
\end{proof}

\begin{remark}\label{remark1_3}
In the special case when $v=\frac{1}{2}$, our inequalities in Corollary \ref{corollary2_2} improve inequality \eqref{24}. This follows from the fact that $\frac{{{m}_{1}}\sharp{{M}_{1}}}{{{m}_{1}}\nabla {{M}_{1}}},\frac{{{M}_{1}}\sharp{{m}_{1}}}{{{M}_{1}}\nabla {{m}_{1}}}\le 1$.
\end{remark}

As an application of Proposition \ref{8}, we estimate the bounds of Tsallis relative operator entropy defined by \cite{furuichi}:
$$
T_v(A|B) \equiv A^{1/2}\ln_v\left(A^{-1/2}BA^{-1/2}\right)A^{1/2}=\frac{A\sharp_vB-A}{v}
$$
for two positive operators $A,B$ and $v\in \left( 0,1 \right]$, where $\ln_vx\equiv \frac{x^v -1}{v}$ is defined on $x>0$ for $v\neq 0$.
\begin{corollary}\label{c8}
	Let $A,B$ be two positive operators, $v\in \left( 0,1 \right]$ and $m_1,m_2,M_1,M_2$ be positive real numbers.
	\begin{itemize}
		\item[(i)] If $0<m_2I\le A\le m_1I<M_1I\le B\le M_2I$, then
		$$
		\frac{\left( m_2\sharp_vM_2\right) A\nabla_v B -\left(m_2\nabla_v M_2\right)A}{v\left(m_2\nabla_v M_2\right)}\le T_v(A|B) \le \frac{\left( m_1\sharp_vM_1\right) A\nabla_v B -\left(m_1\nabla_v M_1\right)A}{v\left(m_1\nabla_v M_1\right)}
		$$
		and
		$$
		\frac{\left( m_1\sharp_vM_1\right) A\nabla_v B -\left(m_1!_v M_1\right)A}{v\left(m_1!_v M_1\right)}\le T_v(A|B) \le \frac{\left( m_2\sharp_vM_2\right) A\nabla_v B -\left(m_2!_v M_2\right)A}{v\left(m_2!_v M_2\right)}.
		$$
		\item[(ii)] If $0<m_2I\le B\le m_1I<M_1I\le A\le M_2I$, then
		$$
		\frac{\left( M_2\sharp_vm_2\right) A\nabla_v B -\left(M_2\nabla_v m_2\right)A}{v\left(M_2\nabla_v m_2\right)}\le T_v(A|B) \le \frac{\left( M_1\sharp_vm_1\right) A\nabla_v B -\left(M_1\nabla_v m_1\right)A}{v\left(M_1\nabla_v m_1\right)}
		$$
		and
		$$
		\frac{\left( M_1\sharp_vm_1\right) A\nabla_v B -\left(M_1!_v m_1\right)A}{v\left(M_1!_v m_1\right)}\le T_v(A|B) \le \frac{\left( M_2\sharp_vm_2\right) A\nabla_v B -\left(M_2!_v m_2\right)A}{v\left(M_2!_v m_2\right)}.
		$$
	\end{itemize}
\end{corollary}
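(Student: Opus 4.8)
The plan is to read all eight estimates directly off the algebraic identity $T_v(A|B)=\frac{A\sharp_v B-A}{v}$ recorded in the definition of the Tsallis relative operator entropy, using that $v>0$. Subtracting the fixed operator $A$ from both sides of an operator inequality and then dividing by the positive scalar $v$ each preserve the order $\le$, so any two-sided bound $\alpha\,X\le A\sharp_v B\le\beta\,X$, with $X$ one of $A\nabla_v B$ or $A!_v B$, turns at once into $\frac{\alpha X-A}{v}\le T_v(A|B)\le\frac{\beta X-A}{v}$; I would then combine each outer expression into a single fraction to reach the displayed form.

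For part (i) nothing new is needed: the four required bounds are exactly the chains \eqref{1} (with $X=A\nabla_v B$) and \eqref{2} (with $X=A!_v B$) of Proposition \ref{8}, whose hypothesis $0<m_2 I\le A\le m_1 I<M_1 I\le B\le M_2 I$ is identical to the one assumed here. Hence I would substitute the lower and upper constants from \eqref{1} and \eqref{2} into the identity above and clear denominators, which yields both displayed pairs.

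For part (ii) the hypothesis interchanges the roles of $A$ and $B$, so the spectral variable $X=A^{-1/2}BA^{-1/2}$ now satisfies $\tfrac{m_2}{M_2}I\le X\le\tfrac{m_1}{M_1}I$ with both endpoints at most $1$, instead of lying above $1$ as in Proposition \ref{8}. I would therefore repeat the functional-calculus step on the interval $(0,1]$, where $\hat f_v(x)=\frac{x^v}{(1-v)+vx}$ is increasing (as already observed in the proof of Corollary \ref{corollary2_2}) and $g_v$ is decreasing, since $g_v'(x)=v(1-v)x^{v-2}(x-1)<0$ for $0<x<1$. Evaluating at the endpoints and using the identifications $\hat f_v\!\left(\tfrac{m_i}{M_i}\right)=\frac{M_i\sharp_v m_i}{M_i\nabla_v m_i}$ and $g_v\!\left(\tfrac{m_i}{M_i}\right)=\frac{M_i\sharp_v m_i}{M_i!_v m_i}$ gives the $\nabla$- and $!$-versions of the mean inequalities tailored to this range (the $\nabla$-version being precisely the chain appearing in the proof of Corollary \ref{corollary2_2}(ii)). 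Plugging these into the identity as in part (i) produces the four inequalities of (ii).

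The computations are routine; the one point that really demands care, and which I expect to be the main obstacle, is the bookkeeping in part (ii). Because the monotonicity of both $\hat f_v$ and $g_v$ reverses once the spectrum drops below $1$, the endpoint that furnishes the maximum (hence the upper constant) switches sides, and each evaluated value $\hat f_v(\cdot)$ and $g_v(\cdot)$ must be matched with the correct quotient of scalar means so that the four constants land in the correct positions.
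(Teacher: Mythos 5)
Your proposal is correct and coincides with the paper's own route: the paper gives no written proof of Corollary \ref{c8}, presenting it as a direct application of Proposition \ref{8} through the identity $T_v(A|B)=\frac{A\sharp_vB-A}{v}$ (subtract $A$, divide by $v>0$), which is exactly your part (i); and your part (ii) --- rerunning the functional-calculus step for $\hat f_v$ and $g_v$ on $(0,1]$, where both monotonicities reverse, with the evaluations $\hat f_v\left(\frac{m_i}{M_i}\right)=\frac{M_i\sharp_v m_i}{M_i\nabla_v m_i}$ and $g_v\left(\frac{m_i}{M_i}\right)=\frac{M_i\sharp_v m_i}{M_i!_v m_i}$ --- is precisely the argument the paper itself records in the proof of Corollary \ref{corollary2_2}(ii). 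Two remarks on fidelity to the printed text. First, your chain for (i) tacitly repairs a typo in \eqref{2}, whose upper constant should read $\frac{m_2\sharp_vM_2}{m_2!_vM_2}$ rather than $\frac{m_2\sharp_vM_2}{m_1!_vM_2}$; your evaluation of $g_v$ at the endpoints gives the correct constant. Second --- and this you did not flag --- taking $X=A!_vB$ in your scheme yields second displays whose numerators contain $A!_vB$, whereas the corollary as printed has $A\nabla_vB$ there; the printed version is in fact false: with $v=\frac{1}{2}$, $A=m_2I=m_1I=I$ and $B=M_1I=M_2I=4I$, the claimed lower bound in (i) would force $\frac{m_1\sharp_vM_1}{m_1!_vM_1}\,a\nabla_vb\le a\sharp_vb$, i.e.\ $\frac{2}{8/5}\cdot\frac{5}{2}=\frac{25}{8}\le 2$, a contradiction. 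So your argument proves the corrected (clearly intended) statement, but your claim to ``reach the displayed form'' is inaccurate for the harmonic-mean displays, and noting the discrepancy explicitly would have strengthened the write-up.
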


In the following, we present related results for $v\notin \left[ 0,1 \right]$.
It is known that if $a,b>0$ and $v\notin \left[ 0,1 \right]$, then $a{{\nabla }_{v}}b\le a{{\sharp}_{v}}b$ (this  fact  has  been  studied in some details  in \cite{2} and was refined later in \cite{sab_mia}), which implies
\begin{equation}\label{5}
A{{\nabla }_{v}}B\le A{{\sharp}_{v}}B,
\end{equation}
whenever $A,B\in \mathcal{B}\left( \mathcal{H} \right)$ are two positive  operators.

The following result provides a multiplicative refinement and reverse of inequality \eqref{5}. We omit the details of the proof since it is similar to the proof of Proposition \ref{8}. We also remark that in \cite{2,sab_mia} only additive refinements were given. Here we present multiplicative refinements and reverses.
\begin{proposition}\label{13}
	Let $A,B$ be two positive operators and $m_1,m_2,M_1,M_2$ be positive real numbers.
	 If $0<m_2I\le A\le m_1I<M_1I\le B\le M_2I$, then
		\[\frac{m_1{{\sharp}_{v}}M_1}{m_1{{\nabla }_{v}}M_1}A{{\nabla }_{v}}B\le A{{\sharp}_{v}}B\le \frac{m_2{{\sharp}_{v}}M_2}{m_2{{\nabla }_{v}}M_2}A{{\nabla }_{v}}B\quad\text{ for }v>1\]
		and
		\[\frac{m_1{{!}_{v}}M_1}{m_1{{\sharp}_{v}}M_1}A{{\sharp}_{v}}B\le A{{!}_{v}}B\le \frac{m_2{{!}_{v}}M_2}{m_2{{\sharp}_{v}}M_2}A{{\sharp}_{v}}B\quad\text{ for }v<0.\]

\end{proposition}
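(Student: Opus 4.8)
The plan is to imitate the proof of Proposition~\ref{8} line by line, reusing the same auxiliary functions but recording how their monotonicity changes once $v$ leaves $[0,1]$. For the range $v>1$ I would keep
\[
\hat f_v(x)=\frac{x^{v}}{(1-v)+vx},\qquad \frac{M_1}{m_1}\le x\le\frac{M_2}{m_2},
\]
and simply recompute the derivative, namely
\[
\hat f_v'(x)=\frac{v(1-v)\,x^{v-1}(1-x)}{\big((1-v)+vx\big)^{2}}.
\]
Its sign is that of $v(1-v)(1-x)$. Since every $x$ in the interval satisfies $x\ge M_1/m_1>1$, we have $1-x<0$; for $v\in[0,1]$ this forced $\hat f_v$ to be decreasing, whereas for $v>1$ the factor $1-v$ reverses the sign and $\hat f_v$ becomes \emph{increasing}. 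Consequently its extrema over $[M_1/m_1,M_2/m_2]$ switch endpoints, so that $\hat f_v(M_1/m_1)\le \hat f_v(x)\le \hat f_v(M_2/m_2)$. Substituting $x=A^{-1/2}BA^{-1/2}$, whose spectrum lies in $[M_1/m_1,M_2/m_2]$, and conjugating by $A^{1/2}$ then produces the first chain, once one recognises $\hat f_v(M_i/m_i)=\tfrac{m_i\sharp_v M_i}{m_i\nabla_v M_i}$.

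For $v<0$ I would instead compare the harmonic and geometric means directly. Writing $T=A^{-1/2}BA^{-1/2}$, one has $A\sharp_v B=A^{1/2}T^{v}A^{1/2}$ and $A!_vB=A^{1/2}\big((1-v)I+vT^{-1}\big)^{-1}A^{1/2}$, so the scalar function governing the ratio of $A!_vB$ to $A\sharp_vB$ is
\[
\frac{x^{\,1-v}}{v+(1-v)x}=\hat f_{1-v}(x).
\]
The key observation is that the substitution $w=1-v$ carries $v<0$ to $w>1$, so this is precisely the increasing-function situation just treated; evaluating at the two endpoints and using the identity $\hat f_{1-v}(M_i/m_i)=\tfrac{m_i!_vM_i}{m_i\sharp_vM_i}$ yields the second chain.

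The only genuinely new point — and the step I would treat most carefully — is the positivity that legitimises the functional calculus and the congruence by $A^{1/2}$ once $v\notin[0,1]$. For $v\in[0,1]$ the weights $1-v,v$ are nonnegative and $(1-v)+vx>0$ holds automatically, but for $v>1$ or $v<0$ one of the weights is negative. Both relevant denominators are nonetheless positive on the interval because $x>1$: indeed $(1-v)+vx=1+v(x-1)>0$ when $v>1$, and $v+(1-v)x=1+(1-v)(x-1)>0$ when $v<0$. The same estimates keep $A\nabla_vB$ and $A!_vB$ positive, so no inequality is reversed when passing through the functional calculus. Everything else is the routine endpoint evaluation and standard functional calculus argument already carried out for Proposition~\ref{8}.
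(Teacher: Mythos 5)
Your proof is correct and follows essentially the route the paper intends (it omits the details, stating only that the argument is ``similar to the proof of Proposition~\ref{8}''): endpoint monotonicity of the auxiliary scalar function on $\left[\frac{M_1}{m_1},\frac{M_2}{m_2}\right]$, followed by the standard functional calculus and congruence by $A^{1/2}$. Your substitution $w=1-v$ for the case $v<0$ is just an equivalent shortcut to differentiating $g_v(x)=(1-v)x^{v}+vx^{v-1}$ directly as in Proposition~\ref{8}, and your explicit check that $(1-v)+vx>0$ and $v+(1-v)x>0$ on the interval is a welcome piece of care that the paper leaves implicit.
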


\subsection{Related inequalities for operator monotone functions}

In this section, we present operator  inequalities involving positive linear maps and operator monotone functions. We begin with the following application of Theorem \ref{19}.

\begin{theorem}\label{theorem_c}
Let $\Phi $ be a positive linear map, $A,B$ be  positive operators such that $sA\le B\le tA$ for some scalars $0<s\le t$, and let ${{!}_{v}}\le {{\sigma }_{v}},{{\tau }_{v}}\le {{\nabla }_{v}}$ for any $v\in \left[ 0,1 \right]$.

If $f$ is a nonzero operator monotone function on $\left[ 0,\infty  \right)$, then
\begin{equation} \label{theorem_b_ineq01}
f\left( \Phi \left( A \right) \right){{\tau }_{v}}f\left( \Phi \left( B \right) \right)\le \xi \psi f\left( \Phi \left( A{{\sigma }_{v}}B \right) \right).
\end{equation}
If $g$ is a nonzero operator monotone decreasing function on $\left[ 0,\infty  \right)$, then
\[g\left( \Phi \left( A{{\sigma }_{v}}B \right) \right)\le \xi \psi \left( g\left( \Phi \left( A \right) \right){{\tau }_{v}}g\left( \Phi \left( B \right) \right) \right),\]
where $\xi =\max \left\{ \frac{\left( 1-v \right)+vs}{{{s}^{v}}},\frac{\left( 1-v \right)+vt}{{{t}^{v}}} \right\}$ and $\psi =\max \left\{ {{s}^{v}} \left(\left( 1-v \right)+\frac{v}{s}\right),{{t}^{v}} \left( \left( 1-v \right)+\frac{v}{t}\right) \right\}$.
\end{theorem}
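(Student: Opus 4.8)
The plan is to prove the operator monotone (increasing) case first by a direct chain, and then to deduce the operator monotone decreasing case by an inversion/duality argument. Throughout I use that $f$, being a nonzero nonnegative operator monotone function on $[0,\infty)$, is operator concave with $f(0)\ge 0$ (by \cite[Theorem V.2.5]{bhatia}), and that $\xi,\psi\ge 1$, since the values of $f_v$ and $g_v$ are $\ge 1$ by the weighted AM--GM inequality.

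For \eqref{theorem_b_ineq01}, I would start from the left-hand side and exploit ${{\tau }_{v}}\le {{\nabla }_{v}}$ to write $f(\Phi(A)){{\tau }_{v}}f(\Phi(B))\le f(\Phi(A)){{\nabla }_{v}}f(\Phi(B))$. Since $\Phi$ is linear, $\Phi(A){{\nabla }_{v}}\Phi(B)=\Phi(A{{\nabla }_{v}}B)$, and operator concavity of $f$ then gives $f(\Phi(A)){{\nabla }_{v}}f(\Phi(B))\le f(\Phi(A{{\nabla }_{v}}B))$. Next I invoke Theorem \ref{19}: rearranging $\frac{1}{\xi}A{{\nabla }_{v}}B\le A{{\sharp}_{v}}B\le \psi A{{!}_{v}}B$ yields $A{{\nabla }_{v}}B\le \xi\psi\,A{{!}_{v}}B$, and since ${{!}_{v}}\le {{\sigma }_{v}}$ we obtain $A{{\nabla }_{v}}B\le \xi\psi\,A{{\sigma }_{v}}B$. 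Applying the positive linear map $\Phi$ and then the operator monotone $f$ gives $f(\Phi(A{{\nabla }_{v}}B))\le f(\xi\psi\,\Phi(A{{\sigma }_{v}}B))$. The last ingredient is a scaling lemma: for nonnegative operator concave $f$ and a scalar $c\ge 1$ one has $f(cX)\le c\,f(X)$ for all $X\ge 0$; this follows by writing $X=\frac{1}{c}(cX)+(1-\frac{1}{c})\cdot 0$ and using concavity together with $f(0)\ge 0$. Applying it with $c=\xi\psi$ and $X=\Phi(A{{\sigma }_{v}}B)$ turns $f(\xi\psi\,\Phi(A{{\sigma }_{v}}B))$ into $\xi\psi\,f(\Phi(A{{\sigma }_{v}}B))$, which closes the chain and proves \eqref{theorem_b_ineq01}.

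For the decreasing case, the plan is to reduce to \eqref{theorem_b_ineq01} via the substitution $h:=1/g$. Because $Y\mapsto Y^{-1}$ is order-reversing, $g$ being operator monotone decreasing (and strictly positive, being a nonzero such function) forces $h$ to be a nonnegative operator monotone increasing function, so \eqref{theorem_b_ineq01} applies to $h$ for every mean in $[{{!}_{v}},{{\nabla }_{v}}]$. Using the Kubo--Ando duality $X^{-1}{{\tau }_{v}}Y^{-1}=(X\,\tau_v'\,Y)^{-1}$, where the adjoint mean $\tau_v'$ again lies between ${{!}_{v}}$ and ${{\nabla }_{v}}$ (duality is order-reversing and interchanges ${{\nabla }_{v}}$ with ${{!}_{v}}$), the target $g(\Phi(A{{\sigma }_{v}}B))\le \xi\psi(g(\Phi(A)){{\tau }_{v}}g(\Phi(B)))$ becomes, after taking inverses on both sides (which reverses the order), exactly $h(\Phi(A))\tau_v' h(\Phi(B))\le \xi\psi\,h(\Phi(A{{\sigma }_{v}}B))$. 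This is precisely \eqref{theorem_b_ineq01} for $f=h$ with the mean $\tau_v'$, so it holds.

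The step I expect to be the main obstacle is the decreasing case. A naive mirror of the increasing argument, using operator convexity of $g$ and the scaling lemma, only delivers the weaker bound with ${{\nabla }_{v}}$ in place of ${{\tau }_{v}}$ on the right, because convexity pushes $g$ through the arithmetic mean but not through an arbitrary smaller mean. Obtaining the full strength for a general ${{\tau }_{v}}\in[{{!}_{v}},{{\nabla }_{v}}]$ really requires passing to $h=1/g$ and exploiting that inversion sends a mean to its (again admissible) adjoint, thereby keeping everything inside the class of means between ${{!}_{v}}$ and ${{\nabla }_{v}}$.
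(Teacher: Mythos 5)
Your proof of the first inequality \eqref{theorem_b_ineq01} is correct and essentially the paper's own argument: the same chain $\tau_v\le\nabla_v$, operator concavity of $f$, the bound $A\nabla_v B\le \xi\,A\sharp_v B\le\xi\psi\, A!_v B\le \xi\psi\, A\sigma_v B$ from Theorem \ref{19} pushed through $\Phi$, and the scaling fact $f(cX)\le c f(X)$ for $c\ge 1$ (which you, unlike the paper, actually prove; your remark that $\xi,\psi\ge 1$ by weighted AM--GM is a detail the paper leaves implicit but that the scaling step needs). For the decreasing case, however, you take a genuinely different route. The paper proves it directly: it invokes \cite[Remark 2.6]{6} (Ando--Hiai) to get $g\left( \Phi \left( A \right) \right)\tau_v\, g\left( \Phi \left( B \right) \right)\ge g\left( \Phi \left( A \right)\nabla_v \Phi \left( B \right) \right)$, then uses $\Phi(A)\nabla_v\Phi(B)\le\xi\psi\,\Phi(A\sigma_v B)$ together with $g(\alpha X)\ge \alpha^{-1}g(X)$ for $\alpha\ge 1$. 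You instead reduce the decreasing case to the increasing one via $h=1/g$ and Kubo--Ando adjoint duality: inverting the target inequality (using $X\le cY \Leftrightarrow Y^{-1}\le cX^{-1}$) and the identity $\left( X\tau_v Y \right)^{-1}=X^{-1}\tau_v' Y^{-1}$ converts the claim into \eqref{theorem_b_ineq01} for $f=h$ with the mean $\tau_v'$; this is legitimate because adjunction is order-reversing and interchanges $\nabla_v$ with $!_v$, so $!_v\le\tau_v\le\nabla_v$ gives $!_v\le\tau_v'\le\nabla_v$ (in fact only $\tau_v\ge\, !_v$, hence $\tau_v'\le\nabla_v$, is used by the first part), and the first part holds for every admissible mean, in particular $\tau_v'$. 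The trade-off: the paper's proof is shorter but imports the Ando--Hiai log-convexity result as a black box, while your duality argument is self-contained, makes the decreasing case a formal consequence of the increasing one, and your closing observation is apt --- mere operator convexity of $g$ would only yield $\nabla_v$ on the right-hand side, so some extra input (Ando--Hiai or adjoint duality) is genuinely required. One caveat, shared equally by the paper's proof: both your step $h=1/g$ and the paper's step $g(\alpha X)\ge\alpha^{-1}g(X)$ require $g>0$, which holds under the standing convention that a nonzero operator monotone decreasing function on $\left[0,\infty\right)$ is nonnegative (hence strictly positive), exactly as you note.
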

\begin{proof}
On account of Theorem \ref{19}, we have
\begin{equation}\label{9}
\Phi \left( A \right){{\nabla }_{v}}\Phi \left( B \right)\le \xi \psi \Phi \left( A{{\sigma }_{v}}B \right).
\end{equation}
It follows from the inequality \eqref{9} that
\[\begin{aligned}
 f\left( \Phi \left( A \right) \right){{\tau }_{v}}f\left( \Phi \left( B \right) \right)&\le f\left( \Phi \left( A \right) \right){{\nabla }_{v}}f\left( \Phi \left( B \right) \right) \\ 
& \le f\left( \Phi \left( A \right){{\nabla }_{v}}\Phi \left( B \right) \right) \\ 
& \le f\left( \xi \psi \Phi \left( A{{\sigma }_{v}}B \right) \right)\\ 
& \le \xi \psi f\left( \Phi \left( A{{\sigma }_{v}}B \right) \right),
\end{aligned}\]
where, in the last line, we have used the fact that for $\alpha\geq 1,$ $f(\alpha A)\leq \alpha f(A)$ when $f$ is operator monotone.\\
For the second inequality we can write
\[g\left( \Phi \left( A \right) \right){{\tau }_{v}}g\left( \Phi \left( B \right) \right)\ge g\left( \Phi \left( A \right){{\nabla }_{v}}\Phi \left( B \right) \right)\ge g\left( \xi \psi \Phi \left( A{{\sigma }_{v}}B \right) \right)\ge \frac{1}{\xi \psi }g\left( \Phi \left( A{{\sigma }_{v}}B \right) \right),\]
where the first inequality follows from \cite[Remark 2.6]{6}.
\end{proof}

\begin{remark}
Taking $v=\frac{1}{2}$, $s=\frac{m}{M}$ and $t=\frac{M}{m}$ in Theorem \ref{theorem_c}, we have $\xi=\psi=\frac{1}{2}\left(\sqrt{\frac{m}{M}}+\sqrt{\frac{M}{m}}\right)$. Since $\xi \psi=\frac{(M+m)^2}{4Mm}$, the inequality (\ref{theorem_b_ineq01}) recovers the inequality (\ref{Hoa_ineq01}).
\end{remark}

\begin{remark}
In Theorem \ref{theorem_c}, it is proved that for two means $\tau_v,\sigma_v,$  we have:
If $f$ is a nonzero operator monotone function on $\left[ 0,\infty  \right)$, then
\begin{equation} \label{theorem_c_ineq01}
f\left( \Phi \left( A \right) \right){{\tau }_{v}}f\left( \Phi \left( B \right) \right)\le \xi \psi f\left( \Phi \left( A{{\sigma }_{v}}B \right) \right).
\end{equation}

We can modify the constant $\xi\psi$ as follows. Take the function $h_v(x)=(1-v+vx)(1-v+\frac{v}{x}), 0\leq v\leq 1, s\leq x\leq t.$ Direct computations show that
$$h_v'(x)=\frac{v(1-v)(x^2-1)}{x^2},$$ which implies 
$$h_v(x)\leq\max\{h_v(s),h_v(t)\}:=\alpha.$$

So, if $!_v\leq\tau_v,\sigma_v\leq\nabla_v$, then we obtain by the similar way to the proof of Theorem \ref{theorem_c},

\begin{equation} 
f\left( \Phi \left( A \right) \right){{\tau }_{v}}f\left( \Phi \left( B \right) \right)\le \alpha f\left( \Phi \left( A{{\sigma }_{v}}B \right) \right).
\end{equation}

That is, the constant $\xi\psi$ has been replaced by $\alpha.$

Notice that $\xi\psi=\alpha$ in case both maxima (for $\xi,\psi$) are attained at the same $t$ or $s$. If $s,t\leq 1$ or $s,t\geq 1$, we do have $\alpha=\xi\psi$. But, if $s<1$ and $t>1$,  it can be seen that that $\alpha \le  \xi\psi$, which is a better approximation.
\end{remark}

Notice that Theorem \ref{theorem_c} is a multiplicative inequality, where the two sides of the given inequalities are related via scalar multiplication. The next result is an additive version, where upper bounds of the difference between $f\left( \Phi \left( A \right) \right){{\tau }}f\left( \Phi \left( B \right) \right)$ and $f\left( \Phi \left( A{{\sigma }}B \right) \right)$ are given.

\begin{corollary}
Let $\Phi $ be a positive unital linear map, $A,B$ be two  positive  operators such that $mI\le A,B\le MI$ for some scalars $0<m<M$, and let ${{!}}\le {{\sigma }},{{\tau }}\le {{\nabla }}$.
	
If $f$ is a nonzero operator monotone function on $\left[ 0,\infty  \right)$, then
\[f\left( \Phi \left( A \right) \right){{\tau }}f\left( \Phi \left( B \right) \right)-f\left( \Phi \left( A{{\sigma }}B \right) \right)\le \frac{{{\left( M-m \right)}^{2}}}{4Mm}f\left( M \right)I.\]
Further, if $g$ is a nonzero operator monotone decreasing function on $\left[ 0,\infty  \right)$, then
\[g\left( \Phi \left( A{{\sigma }}B \right) \right)-g\left( \Phi \left( A \right) \right){{\tau }}g\left( \Phi \left( B \right) \right)\le \frac{{{\left( M-m \right)}^{2}}}{4Mm}g\left( m \right)I.\]
\end{corollary}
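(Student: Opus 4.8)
The plan is to derive both additive estimates directly from the multiplicative inequalities of Theorem~\ref{theorem_c}, specialized to $v=\frac12$, $s=\frac{m}{M}$ and $t=\frac{M}{m}$. First I would record that the present hypothesis $mI\le A,B\le MI$ forces the sandwich condition needed in Theorem~\ref{theorem_c}: since $B\ge mI=\frac{m}{M}(MI)\ge\frac{m}{M}A$ and $B\le MI=\frac{M}{m}(mI)\le\frac{M}{m}A$, we have $sA\le B\le tA$ with $s=\frac{m}{M}$ and $t=\frac{M}{m}$. As computed in the Remark following Theorem~\ref{theorem_c}, for these values $\xi\psi=\frac{(M+m)^2}{4Mm}$, and the only algebraic fact driving the whole argument is the identity
\[\frac{(M+m)^2}{4Mm}-1=\frac{(M-m)^2}{4Mm}.\]

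For the operator monotone case, Theorem~\ref{theorem_c} gives $f(\Phi(A))\,\tau\,f(\Phi(B))\le\frac{(M+m)^2}{4Mm}f(\Phi(A\sigma B))$. Subtracting $f(\Phi(A\sigma B))$ from both sides and using the displayed identity yields
\[f(\Phi(A))\,\tau\,f(\Phi(B))-f(\Phi(A\sigma B))\le\frac{(M-m)^2}{4Mm}f(\Phi(A\sigma B)).\]
It then remains to bound the right-hand side by a scalar multiple of the identity. Because $\sigma$ is a mean with $!\le\sigma\le\nabla$, monotonicity of means gives $mI=m\sigma m\le A\sigma B\le M\sigma M=MI$; applying the unital positive map $\Phi$ gives $mI\le\Phi(A\sigma B)\le MI$, and operator monotonicity of $f$ then gives $f(\Phi(A\sigma B))\le f(M)I$. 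Multiplying this operator inequality by the nonnegative scalar $\frac{(M-m)^2}{4Mm}$ completes the first inequality.

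For the operator monotone decreasing case, the second conclusion of Theorem~\ref{theorem_c} reads $g(\Phi(A\sigma B))\le\frac{(M+m)^2}{4Mm}\bigl(g(\Phi(A))\,\tau\,g(\Phi(B))\bigr)$. Subtracting $g(\Phi(A))\,\tau\,g(\Phi(B))$ and using the same identity reduces matters to showing $g(\Phi(A))\,\tau\,g(\Phi(B))\le g(m)I$. Here I would use $mI\le\Phi(A),\Phi(B)\le MI$ together with the fact that $g$ is operator monotone \emph{decreasing} to obtain $g(\Phi(A))\le g(m)I$ and $g(\Phi(B))\le g(m)I$; monotonicity of the mean $\tau$ then yields $g(\Phi(A))\,\tau\,g(\Phi(B))\le (g(m)I)\,\tau\,(g(m)I)=g(m)I$, and again multiplying by the nonnegative constant finishes the proof.

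The computations are elementary once Theorem~\ref{theorem_c} is invoked. I do not expect a genuine obstacle beyond bookkeeping; the one point requiring attention is that for the \emph{decreasing} function $g$ the order of the bounds reverses, so that the controlling value is $g(m)$ rather than $g(M)$, and correspondingly the estimate must be run on the mean of the \emph{upper} bounds $g(m)I$ rather than on $\Phi(A\sigma B)$. Each order-preserving step—applying $\Phi$, applying $f$ or $g$, forming the means, and scaling by a nonnegative constant—is routine and needs only to be justified in the correct direction.
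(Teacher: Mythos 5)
Your proposal is correct and follows essentially the same route as the paper: both apply the multiplicative bound of Theorem~\ref{theorem_c} with constant $\xi\psi=\frac{(M+m)^2}{4Mm}$ (at $v=\tfrac12$, $s=\tfrac{m}{M}$, $t=\tfrac{M}{m}$), subtract, use $\frac{(M+m)^2}{4Mm}-1=\frac{(M-m)^2}{4Mm}$, and then bound $f(\Phi(A\sigma B))\le f(M)I$ in the first case and $g(\Phi(A))\,\tau\,g(\Phi(B))\le g(m)I$ in the second. Your only addition is the explicit verification of the sandwich condition $\frac{m}{M}A\le B\le\frac{M}{m}A$, which the paper leaves implicit.
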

\begin{proof}
Since $mI\le A,B\le MI$, we get $mI\le \Phi \left( A{{\sigma }}B \right)\le MI$. Therefore,
\[f\left( m \right)I\le f\left( \Phi \left( A{{\sigma }}B \right) \right)\le f\left( M \right)I,\]
and hence, 
\[\begin{aligned}
 f\left( \Phi \left( A \right) \right){{\tau }}f\left( \Phi \left( B \right) \right)-f\left( \Phi \left( A{{\sigma }}B \right) \right)&\le \left( \frac{{{\left( M+m \right)}^{2}}}{4Mm}-1 \right)f\left( \Phi \left( A{{\sigma }_{v}}B \right) \right) \\ 
& \le \frac{{{\left( M-m \right)}^{2}}}{4Mm}f\left( M \right)I.  
\end{aligned}\] 
If $g$ is operator monotone decreasing, then $g\left( M \right)I\le g\left( \Phi \left( A \right) \right),g\left( \Phi \left( B \right) \right)\le g\left( m \right)I$. Therefore 
\[g\left( M \right)I\le g\left( \Phi \left( A \right) \right){{\tau }}g\left( \Phi \left( B \right) \right)\le g\left( m \right)I.\]
By repeating the same argument as above we get the desired result.
\end{proof}
It is shown in \cite{ghaemi} that if $f$ is an operator monotone function on $\left[ 0,\infty  \right)$ and $sA\le B\le tA$, for some scalars $0<s\le t$, then
\begin{equation}\label{15}
f\left( A \right){{\sharp}_{v}}f\left( B \right)\le \max \left\{ S\left( s \right),S\left( t \right) \right\}f\left( A{{\sharp}_{v}}B \right),
\end{equation}
where $S\left( t \right)=\frac{{{t}^{\frac{1}{t-1}}}}{e\log {{t}^{\frac{1}{t-1}}}}$. As a byproduct of Theorem \ref{theorem_c}, we have the following generalization of \eqref{15}:
	\[f\left( A \right){{\tau }_{v}}f\left( B \right)\le \xi \psi f\left( A{{\sigma }_{v}}B \right).\] 
However, we can improve \eqref{15} as follows, where one can show that the constant $\xi$ below is smaller than $ \max \left\{ S\left( s \right),S\left( t \right) \right\}$ from \eqref{15}.
\begin{corollary}
Let $f:\left[ 0,\infty  \right)\to \left[ 0,\infty  \right)$ be an operator monotone function, $A,B$ be positive operators such that $sA\le B\le tA$, for some scalars $0<s\le t$. Then for all $v\in \left[ 0,1 \right],$
\[f\left( A \right){{\sharp}_{v}}f\left( B \right)\le \xi f\left( A{{\sharp}_{v}}B \right),\]
where $\xi =\max \left\{ \frac{\left( 1-v \right)+vs}{{{s}^{v}}},\frac{\left( 1-v \right)+vt}{{{t}^{v}}} \right\}$.
\end{corollary}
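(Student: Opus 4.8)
The plan is to chain together four elementary estimates, using only the \emph{first} inequality of Theorem~\ref{19} so that the multiplicative constant comes out as $\xi$ rather than the larger $\xi\psi$ of Theorem~\ref{theorem_c}. Before starting I would record the auxiliary fact that $\xi\ge 1$. Indeed, the function $f_v(x)=\tfrac{(1-v)+vx}{x^v}$ from the proof of Theorem~\ref{19} has derivative $f_v'(x)=v(1-v)(x-1)x^{-v-1}$, hence a global minimum at $x=1$ with $f_v(1)=1$; therefore $\xi=\max\{f_v(s),f_v(t)\}\ge 1$. This normalization is what makes the final step valid.

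The four moves are as follows. First, since $f(A),f(B)\ge 0$, the arithmetic--geometric mean inequality \eqref{6}, applied to the pair $f(A),f(B)$, gives $f(A)\sharp_v f(B)\le f(A)\nabla_v f(B)$. Second, because a non-negative operator monotone function is operator concave, the weighted concavity inequality yields $f(A)\nabla_v f(B)=(1-v)f(A)+vf(B)\le f\bigl((1-v)A+vB\bigr)=f(A\nabla_v B)$. Third, the first inequality of Theorem~\ref{19} can be rewritten as $A\nabla_v B\le \xi\,(A\sharp_v B)$, so operator monotonicity of $f$ gives $f(A\nabla_v B)\le f\bigl(\xi\,(A\sharp_v B)\bigr)$. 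Fourth, since $\xi\ge 1$ and $f$ is a non-negative operator monotone function, the homogeneity-type bound $f(\alpha X)\le \alpha f(X)$ for $\alpha\ge 1$ (already exploited in the proof of Theorem~\ref{theorem_c}) produces $f\bigl(\xi\,(A\sharp_v B)\bigr)\le \xi f(A\sharp_v B)$. Concatenating these inequalities delivers $f(A)\sharp_v f(B)\le \xi f(A\sharp_v B)$.

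I do not expect a serious obstacle here; the difficulty is purely organizational. The points that need care are: invoking non-negativity of $f$ exactly where operator concavity (step two) and the bound $f(\alpha X)\le\alpha f(X)$ (step four) are used, and confirming $\xi\ge 1$ so that the homogeneity estimate points in the correct direction. The whole improvement over Theorem~\ref{theorem_c} stems from specializing $\sigma_v=\tau_v=\sharp_v$ and routing the argument through the single reverse $A\nabla_v B\le\xi\,(A\sharp_v B)$, thereby never incurring the factor $\psi$ coming from the harmonic-mean side. The separate claim that $\xi\le\max\{S(s),S(t)\}$, which explains why this sharpens \eqref{15}, is a scalar comparison independent of the operator argument above.
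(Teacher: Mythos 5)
Your proof is correct and matches the argument the paper intends: the paper states this corollary without a separate proof, as it is exactly the chain from the proof of Theorem~\ref{theorem_c} with $\Phi$ the identity and $\sigma_v=\tau_v=\sharp_v$, routed only through the first inequality of Theorem~\ref{19} (i.e., $A\nabla_v B\le\xi\, A\sharp_v B$) so that the factor $\psi$ never enters. Your preliminary check that $\xi\ge 1$ (since $f_v$ attains its global minimum value $1$ at $x=1$) is the right point of care, as it is what legitimizes the final step $f\left(\xi\,(A\sharp_v B)\right)\le\xi f\left(A\sharp_v B\right)$, the same fact the paper invokes in the last line of the proof of Theorem~\ref{theorem_c}.
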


\begin{corollary}
	Let $A,B$ be as in Theorem \ref{19} and let $g$ be an operator monotone decreasing function. If $\sigma_v$ is a symmetric mean between $\nabla_v$ and $!_v,$ $0\leq v\leq 1$, then for any vector $h\in\mathcal{H}$,
\[\left\langle g\left( A{{\sigma }_{v}}B \right)h,h \right\rangle \le \xi \psi {{\left\langle g\left( A \right)h,h \right\rangle }^{1-v}}{{\left\langle g\left( B \right)h,h \right\rangle }^{v}},\]
for the same $\xi,\psi$ from above.
\end{corollary}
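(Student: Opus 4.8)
The plan is to split the argument into one operator-level inequality, which is already available, and one elementary scalarization of the weighted geometric mean. First I would apply the decreasing part of Theorem~\ref{theorem_c} with the identity map $\Phi=\mathrm{id}$ (a unital positive linear map) and with the second mean chosen to be $\tau_v=\sharp_v$. This choice is admissible, since the arithmetic--geometric--harmonic inequality \eqref{6} gives $!_v\le\sharp_v\le\nabla_v$, so $\sharp_v$ qualifies as one of the means $\tau_v$ allowed there. Because $g$ is a nonzero operator monotone decreasing function and $\sigma_v$ lies between $!_v$ and $\nabla_v$ by hypothesis, Theorem~\ref{theorem_c} then yields the operator inequality
\begin{equation}\label{planA}
g\left(A\,\sigma_v B\right)\le \xi\psi\,\bigl(g(A)\,\sharp_v\, g(B)\bigr),
\end{equation}
with precisely the constants $\xi,\psi$ of Theorem~\ref{19}. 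It is essential to carry $\sharp_v$ (rather than $\nabla_v$) through to the right-hand side: the geometric mean is exactly what scalarizes to the product $\langle g(A)h,h\rangle^{1-v}\langle g(B)h,h\rangle^{v}$, whereas the arithmetic mean would only give the weaker additive bound $\langle g(A)h,h\rangle\nabla_v\langle g(B)h,h\rangle$.

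Next I would establish the scalar estimate
\begin{equation}\label{planB}
\langle (X\,\sharp_v\, Y)h,h\rangle\le \langle Xh,h\rangle^{1-v}\,\langle Yh,h\rangle^{v},
\end{equation}
for all positive operators $X,Y$, all $h\in\mathcal{H}$, and all $v\in[0,1]$. The mechanism is a one-parameter family of tangent-line operator inequalities: for each $\lambda>0$ one has $X\sharp_v Y\le (1-v)\lambda^{-v}X+v\lambda^{1-v}Y$. This is obtained by conjugating with $X^{-1/2}$ and applying functional calculus to the scalar weighted arithmetic--geometric mean inequality $z^v\le (1-v)\lambda^{-v}+v\lambda^{1-v}z$, valid for $z>0$. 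Evaluating the quadratic form $\langle\cdot\,h,h\rangle$ and then minimizing the resulting expression $(1-v)\lambda^{-v}\langle Xh,h\rangle+v\lambda^{1-v}\langle Yh,h\rangle$ over $\lambda>0$ (the minimum being attained at $\lambda=\langle Xh,h\rangle/\langle Yh,h\rangle$ and equal to the right-hand side of \eqref{planB}) produces \eqref{planB}.

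Finally I would combine the two steps: taking $\langle\cdot\,h,h\rangle$ in \eqref{planA} and inserting \eqref{planB} with $X=g(A)$ and $Y=g(B)$ gives
\[
\langle g(A\,\sigma_v B)h,h\rangle\le \xi\psi\,\langle (g(A)\sharp_v g(B))h,h\rangle\le \xi\psi\,\langle g(A)h,h\rangle^{1-v}\langle g(B)h,h\rangle^{v},
\]
which is the assertion. The only genuinely new content is \eqref{planB}; everything else is an application of Theorem~\ref{theorem_c}. I expect the main technical point to be the validity of \eqref{planB} when $g(A)$ or $g(B)$ fails to be invertible (so that $\langle Xh,h\rangle$ or $\langle Yh,h\rangle$ may vanish and the optimizing $\lambda$ degenerates); this is handled by using the Kubo--Ando definition of $\sharp_v$ and passing to the limit $X+\varepsilon I\downarrow X$, $Y+\varepsilon I\downarrow Y$. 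I note that the symmetry hypothesis on $\sigma_v$ is not actually needed for this route, since neither \eqref{planA} nor \eqref{planB} invokes it.
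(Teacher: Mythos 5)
Your proof is correct, and it reaches the result by a slightly different (more modular) route than the paper. The paper does not invoke Theorem \ref{theorem_c}; instead it re-derives the operator step by hand, as a chain at the quadratic-form level: $\left\langle g(A\sigma_v B)h,h\right\rangle \le \left\langle g(A!_v B)h,h\right\rangle \le \psi\left\langle g(\psi\, A!_vB)h,h\right\rangle \le \psi\left\langle g(A\sharp_vB)h,h\right\rangle \le \xi\psi\left\langle g(A\nabla_vB)h,h\right\rangle \le \xi\psi\left\langle \left(g(A)\sharp_vg(B)\right)h,h\right\rangle$, using Theorem \ref{19} twice together with the dilation property $g(X)\le\alpha\, g(\alpha X)$ for $\alpha\ge1$ and the Ando--Hiai log-convexity step $g(A\nabla_vB)\le g(A)\sharp_v g(B)$; it then cites \cite[Lemma 8]{bourin} for the final scalarization. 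Your argument collapses the entire chain into a single legitimate application of the operator-monotone-decreasing part of Theorem \ref{theorem_c} with $\Phi=\mathrm{id}$ and $\tau_v=\sharp_v$ (admissible by \eqref{6}), which is cleaner and makes the logical dependence explicit; and where the paper cites Bourin's lemma, you prove it from scratch via the tangent-line family $X\sharp_vY\le(1-v)\lambda^{-v}X+v\lambda^{1-v}Y$ optimized at $\lambda=\langle Xh,h\rangle/\langle Yh,h\rangle$ --- a correct and self-contained derivation of exactly the cited lemma, including the right limiting treatment of the degenerate case. Your closing observation is also borne out by the paper's own proof: only $!_v\le\sigma_v\le\nabla_v$ is ever used (the first inequality of the chain needs $\sigma_v\ge\, !_v$), so the word ``symmetric'' in the statement is superfluous; note only that both your proof and the paper's implicitly require $g\ge0$ for the geometric mean $g(A)\sharp_vg(B)$ and the scalar powers to make sense.
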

\begin{proof}
	Notice that, for $0\leq v\leq 1,$
\[\begin{aligned}
 \left\langle g\left( A{{\sigma }_{v}}B \right)h,h \right\rangle &\le \left\langle g\left( A{{!}_{v}}B \right)h,h \right\rangle  \le \psi \left\langle g\left( \psi A{{!}_{v}}B \right)h,h \right\rangle  \\ 
& \le \psi \left\langle g\left( A{{\sharp}_{v}}B \right)h,h \right\rangle   \le \xi \psi \left\langle g\left( \xi A{{\sharp}_{v}}B \right)h,h \right\rangle  \\
&\le \xi \psi \left\langle g\left( A{{\nabla }_{v}}B \right)h,h \right\rangle \le \xi \psi \left\langle \left( g\left( A \right){{\sharp}_{v}}g\left( B \right) \right)h,h \right\rangle  \\ 
& \le \xi \psi {{\left\langle g\left( A \right)h,h \right\rangle }^{1-v}}{{\left\langle g\left( B \right)h,h \right\rangle }^{v}}, 
\end{aligned}\]
where we have used \cite[Lemma 8]{bourin} to obtain the last inequality.
\end{proof}

The following lemma, which we need in our analysis, can be found in \cite[Lemma 3.11]{sab_laaa}.
\begin{lemma}\label{14}
	Let $f:\mathbb{R}\to \mathbb{R}$ be operator convex, $A,B\in \mathcal{B}\left( \mathcal{H} \right)$ be two self-adjoint operators and let $v\notin \left[ 0,1 \right]$. Then
	\begin{equation}\label{11}
	f\left( A \right){{\nabla }_{v}}f\left( B \right)\le f\left( A{{\nabla }_{v}}B \right),
	\end{equation}
	and the reverse inequality holds if $f$ is operator concave.
\end{lemma}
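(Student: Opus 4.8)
The plan is to deduce the reversed inequality from the standard two-point operator Jensen inequality for operator convex functions, namely $f(X\nabla_t Y)\le f(X)\nabla_t f(Y)$ valid for all self-adjoint $X,Y$ and all $t\in[0,1]$. The idea is that when $v\notin[0,1]$ the combination $A\nabla_v B$ is no longer a convex combination, but one can turn the tables: the operator $C:=A\nabla_v B$ together with one of $A,B$ forms a genuine convex combination of the remaining operator, and applying the standard inequality to that combination and rearranging produces exactly the reversed inequality.

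First I would treat the case $v>1$. Writing $C=(1-v)A+vB$ and solving for $B$ gives $B=\bigl(1-\tfrac1v\bigr)A+\tfrac1v C=A\nabla_{1/v}C$, and since $v>1$ the weight $\tfrac1v$ lies in $(0,1)$, so this is a bona fide convex combination. The standard operator convexity inequality then yields $f(B)\le\bigl(1-\tfrac1v\bigr)f(A)+\tfrac1v f(C)$. Multiplying by $v$ and adding $(1-v)f(A)$ to both sides, the $f(A)$ terms cancel and one is left with $(1-v)f(A)+vf(B)\le f(C)$, that is, $f(A)\nabla_v f(B)\le f(A\nabla_v B)$.

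The case $v<0$ is symmetric: solving $C=(1-v)A+vB$ for $A$ gives $A=\tfrac1{1-v}C+\tfrac{-v}{1-v}B$, whose two weights are positive and sum to $1$ (here $1-v>1$ and $-v>0$), so $A$ is a convex combination of $C$ and $B$. Applying operator convexity to $f(A)$, multiplying by $1-v>0$, and rearranging in the same fashion again collapses to $f(A)\nabla_v f(B)\le f(C)$. The operator concave statement then follows immediately by applying the convex case to $-f$.

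The only point requiring care, and the main (mild) obstacle, is justifying the base inequality $f(X\nabla_t Y)\le f(X)\nabla_t f(Y)$ in the present generality: $A$ and $B$ are only assumed self-adjoint rather than positive, and $f$ is operator convex on all of $\mathbb{R}$, so one must invoke the two-variable operator Jensen inequality for operator convex functions on $\mathbb{R}$ rather than the positivity-based concavity property quoted in the preliminaries. Once that is in hand, both reductions are purely algebraic rearrangements, so I expect no genuine difficulty beyond this bookkeeping.
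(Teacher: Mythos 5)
Your proof is correct, and it is essentially the paper's own route: the paper does not prove this lemma but imports it from \cite[Lemma 3.11]{sab_laaa}, where the argument is exactly your rearrangement trick of expressing $B=A\nabla_{1/v}(A\nabla_v B)$ for $v>1$ (resp.\ solving for $A$ when $v<0$) and applying the two-point Jensen inequality to the resulting genuine convex combination. The one ``obstacle'' you flag is in fact vacuous: for a continuous $f:\mathbb{R}\to\mathbb{R}$, the inequality $f(X\nabla_t Y)\le f(X)\nabla_t f(Y)$ for self-adjoint $X,Y$ and $t\in[0,1]$ is precisely the definition of operator convexity on $\mathbb{R}$ (the positivity-based formulation in the preliminaries is only an artifact of the paper working on $[0,\infty)$, where one would additionally need the spectra of $A$, $B$ and $A\nabla_v B$ to stay in the domain --- automatic here since the domain is all of $\mathbb{R}$).
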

\begin{proposition}
	Let $A,B$ be two positive operators and $m_1,m_2,M_1,M_2$ be positive real numbers.
	\begin{itemize}
		\item[(i)] If $0<m_2I\le A\le m_1I<M_1I\le B\le M_2I$, then
		\begin{equation}\label{20}
	f\left( A{{\sharp}_{v}}B \right)\le \frac{m_2{{\sharp}_{v}}M_2}{m_2{{\nabla }_{v}}M_2}\left( f\left( A \right){{\sharp}_{v}}f\left( B \right) \right)\quad\text{ for }v>1
		\end{equation}
		for any operator monotone function $f$, and
		\begin{equation}\label{22}
g\left( A \right){{\sharp}_{v}}g\left( B \right)\le \frac{m_2{{\sharp}_{v}}M_2}{m_2{{\nabla }_{v}}M_2}g\left( A{{\sharp}_{v}}B \right)\quad\text{ for }v>1
		\end{equation}
		for any operator monotone decreasing function $g$.
		\item[(ii)] If $0<m_2I\le B\le m_1I<M_1I\le A\le M_2I$, then
		\[f\left( A{{\sharp}_{v}}B \right)\le \frac{M_2{{\sharp}_{v}}m_2}{M_2{{\nabla }_{v}}m_2}\left( f\left( A \right){{\sharp}_{v}}f\left( B \right) \right)\quad\text{ for }v<0\]
		for any operator monotone function $f$, and
		\[g\left( A \right){{\sharp}_{v}}g\left( B \right)\le \frac{M_2{{\sharp}_{v}}m_2}{M_2{{\nabla }_{v}}m_2}g\left( A{{\sharp}_{v}}B \right)\quad\text{ for }v<0\]
		for any operator monotone decreasing function $g$.
	\end{itemize}
\end{proposition}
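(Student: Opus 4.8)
The plan is to reduce both inequalities to the weighted–mean reverse estimate of Proposition \ref{13}, combined with the convexity/concavity information of Lemma \ref{14} and the scalar reverse AM--GM inequality \eqref{5}. Throughout part (i) I write $C=\frac{m_2\sharp_v M_2}{m_2\nabla_v M_2}$ and observe first that $C\ge 1$, since for $v>1$ one has $m_2\nabla_v M_2\le m_2\sharp_v M_2$ by \eqref{5}. The common starting point is the right-hand estimate of Proposition \ref{13}, namely $A\sharp_v B\le C\,(A\nabla_v B)$.

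For \eqref{20} I would argue exactly as in the proof of Theorem \ref{theorem_c}. Applying the operator monotone $f$ to $A\sharp_v B\le C(A\nabla_v B)$ gives $f(A\sharp_v B)\le f\!\left(C(A\nabla_v B)\right)$, and the bound $f(\alpha X)\le\alpha f(X)$ for $\alpha\ge 1$ (used already in Theorem \ref{theorem_c}) yields $f(A\sharp_v B)\le C\,f(A\nabla_v B)$. Since a nonnegative operator monotone $f$ is operator concave, the reverse inequality of Lemma \ref{14} applies for $v>1\notin[0,1]$ and gives $f(A\nabla_v B)\le f(A)\nabla_v f(B)$; finally \eqref{5} enlarges the bounding side, $f(A)\nabla_v f(B)\le f(A)\sharp_v f(B)$. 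Stringing these together proves \eqref{20}, in fact with the slightly sharper $\nabla_v$ in place of $\sharp_v$.

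For \eqref{22} I would run the dual chain. The operator monotone decreasing $g$ reverses $A\sharp_v B\le C(A\nabla_v B)$ to $g(A\sharp_v B)\ge g\!\left(C(A\nabla_v B)\right)$, and the companion scaling $g(\alpha X)\ge\frac{1}{\alpha}g(X)$ for $\alpha\ge 1$ (the decreasing analogue from \cite[Remark 2.6]{6} used in Theorem \ref{theorem_c}) gives $C\,g(A\sharp_v B)\ge g(A\nabla_v B)$. Since $g$ is operator convex, Lemma \ref{14} gives $g(A)\nabla_v g(B)\le g(A\nabla_v B)$, so combining the two displays I obtain at once the arithmetic-mean version $g(A)\nabla_v g(B)\le C\,g(A\sharp_v B)$. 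The delicate point is to replace the arithmetic mean of the $g$-values by the geometric one, i.e.\ to pass from $g(A)\nabla_v g(B)$ up to the larger quantity $g(A)\sharp_v g(B)$ while preserving the inequality; this is the actual content of \eqref{22} and does not follow from Lemma \ref{14}, because for $v>1$ one has $\nabla_v\le\sharp_v$ and the substitution runs in the unfavourable direction. I would reduce it to the auxiliary estimate $g(A)\sharp_v g(B)\le g(A\nabla_v B)$. For the resolvent generators $g_s(t)=(t+s)^{-1}$ this is clean: using the inverse law $X^{-1}\sharp_v Y^{-1}=(X\sharp_v Y)^{-1}$ and the affineness $(A+s)\nabla_v(B+s)=A\nabla_v B+s$, the estimate is equivalent to $A\nabla_v B+s=(A+s)\nabla_v(B+s)\le(A+s)\sharp_v(B+s)$, which is precisely \eqref{5} applied to the shifted pair.

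The main obstacle, then, is to promote this auxiliary estimate from the generators $g_s$ to an arbitrary operator monotone decreasing $g$: since the weighted geometric mean is not additive, the integral representation of $g$ cannot be passed through $\sharp_v$ term by term, and a genuinely operator-theoretic comparison of Ando--Hiai type, such as $(A\sharp_v B)^{p}\le A^{p}\sharp_v B^{p}$ for $v>1$, appears to be needed to handle the power functions $g(t)=t^{-p}$. Once part (i) is secured, part (ii) requires no new work: using the reflection identity $A\sharp_v B=B\sharp_{1-v}A$ with $w:=1-v>1$ and the roles of $A,B$ interchanged, the hypotheses of (ii) become those of (i), and the identities $M_2\sharp_v m_2=m_2\sharp_{w}M_2$ and $M_2\nabla_v m_2=m_2\nabla_{w}M_2$ convert the constant of (i) into the one claimed in (ii).
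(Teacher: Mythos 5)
Your proof of \eqref{20} is essentially identical to the paper's: the right-hand estimate of Proposition \ref{13}, the scaling bound $f(\alpha X)\le \alpha f(X)$ for $\alpha\ge 1$, the reversed Lemma \ref{14} for the operator concave $f$, and finally \eqref{5}; this part is correct, including the needed observation that $\frac{m_2\sharp_v M_2}{m_2\nabla_v M_2}\ge 1$ for $v>1$. Your reduction of (ii) to (i) via $A\sharp_v B=B\sharp_{1-v}A$ with $w=1-v>1$ and the identities $M_2\sharp_v m_2=m_2\sharp_{w}M_2$, $M_2\nabla_v m_2=m_2\nabla_{w}M_2$ is also sound.

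The genuine gap is \eqref{22}, and you flag it yourself. Your dual chain only gives the weaker inequality $g(A)\nabla_v g(B)\le \frac{m_2\sharp_v M_2}{m_2\nabla_v M_2}\,g(A\sharp_v B)$, since for $v>1$ one has $\nabla_v\le\sharp_v$ and the desired replacement on the left goes the wrong way; your proposed auxiliary estimate $g(A)\sharp_v g(B)\le g(A\nabla_v B)$ is verified only for the resolvent generators $g_s(t)=(t+s)^{-1}$ and, as you note, cannot be pushed through the integral representation because $\sharp_v$ is not additive. No Ando--Hiai type power inequality is needed, however: the paper closes the gap with a short inversion trick that bypasses the auxiliary estimate entirely. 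Since $g$ is operator monotone decreasing, $1/g$ is operator monotone, so \eqref{20} applied to $f=1/g$ gives
\begin{equation*}
g{\left( A\sharp_v B \right)}^{-1}\le \frac{m_2\sharp_v M_2}{m_2\nabla_v M_2}\left( g(A)^{-1}\sharp_v\, g(B)^{-1} \right).
\end{equation*}
Taking inverses, which reverses the order on positive invertible operators, and using the self-duality $X^{-1}\sharp_v Y^{-1}=\left( X\sharp_v Y \right)^{-1}$, valid for every real $v$, yields
\begin{equation*}
g\left( A\sharp_v B \right)\ge \frac{m_2\nabla_v M_2}{m_2\sharp_v M_2}\left( g(A)\sharp_v g(B) \right),
\end{equation*}
which is exactly \eqref{22}. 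So the missing ingredient in your attempt is not a new operator inequality but the observation that the already-proved inequality \eqref{20}, fed with $1/g$ and inverted through the geometric mean, produces the decreasing case for free.
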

\begin{proof}
Let  us  prove  (i).  It follows from Proposition \ref{13} and Lemma \ref{14} that, for $v>1$,
	\[\begin{aligned}
	f\left( A{{\sharp}_{v}}B \right)&\le f\left( \frac{m_2{{\sharp}_{v}}M_2}{m_2{{\nabla }_{v}}M_2}A{{\nabla }_{v}}B \right)  \le \frac{m_2{{\sharp}_{v}}M_2}{m_2{{\nabla }_{v}}M_2}f\left( A{{\nabla }_{v}}B \right) \\ 
	& \le \frac{m_2{{\sharp}_{v}}M_2}{m_2{{\nabla }_{v}}M_2}\left( f\left( A \right){{\nabla }_{v}}f\left( B \right) \right)  \le \frac{m_2{{\sharp}_{v}}M_2}{m_2{{\nabla }_{v}}M_2}\left( f\left( A \right){{\sharp}_{v}}f\left( B \right) \right)  
	\end{aligned}\]
which gives the desired inequality \eqref{20}.	

Since $g$ is operator monotone decreasing,  $\frac{1}{g}$ is operator monotone. Now by applying the inequality \eqref{20} for $f=\frac{1}{g}$, we get
\begin{equation}\label{23}
{{g}^{-1}}\left( A{{\sharp}_{v}}B \right)\le \frac{m_2{{\sharp}_{v}}M_2}{m_2{{\nabla }_{v}}M_2}\left( {{g}^{-1}}\left( A \right){{\sharp}_{v}}{{g}^{-1}}\left( B \right) \right).
\end{equation}
Consequently, from \eqref{23} we get
\[g\left( A{{\sharp}_{v}}B \right)\ge \frac{m_2{{\nabla }_{v}}M_2}{m_2{{\sharp}_{v}}M_2}{{\left( {{g}^{-1}}\left( A \right){{\sharp}_{v}}{{g}^{-1}}\left( B \right) \right)}^{-1}}=\frac{m_2{{\nabla }_{v}}M_2}{m_2{{\sharp}_{v}}M_2}\left( g\left( A \right){{\sharp}_{v}}g\left( B \right) \right),\]
which is  just \eqref{22}. 
\end{proof}

\section*{Acknowledgement}
The author (S.F.) was partially supported by JSPS KAKENHI Grant Number 16K05257. 
The corresponding author (M. S.) was supported by a sabbatical leave from Princess Sumaya University for Technology.

\vskip 0.3 true cm

{\tiny (S. Furuichi) Department of Information Science, College of Humanities and Sciences, Nihon University, 3-25-40, Sakurajyousui, Setagaya-ku, Tokyo, 156-8550, Japan.}

{\tiny \textit{E-mail address:} furuichi@chs.nihon-u.ac.jp}

	{\tiny \vskip 0.3 true cm }
	
	{\tiny (H.R. Moradi) Young Researchers and Elite Club, Mashhad Branch, Islamic Azad
		University, Mashhad, Iran. }
	
	{\tiny \textit{E-mail address:} hrmoradi@mshdiau.ac.ir }
	
{\tiny \vskip 0.3 true cm }

{\tiny (M. Sababheh) Department of Mathematics, University of Sharjah, Sharjah UAE, 
	\textit{Email adress:} m.sababheh@sharjah.ac.ae }

{\tiny (M. Sababheh) Dept. of Basic Sciences, Princess Sumaya Univ. for Tech., Amman,
	Jordan. \textit{E-mail address:} sababheh@psut.edu.jo 
\end{document}